\documentclass[11pt]{article}

\usepackage[T1]{fontenc}
\usepackage{kpfonts}

\usepackage{amsmath,amssymb,amsthm}
\usepackage{mathrsfs}
\usepackage{dsfont}
\usepackage{esint}

\usepackage{array}
\usepackage{geometry}
\usepackage{graphicx}
\usepackage{xcolor}
\usepackage{enumitem}
\usepackage{appendix}
\usepackage{lineno}

\usepackage{changes}
\setdeletedmarkup{\textcolor{magenta}{\sout{#1}}}

\usepackage{hyperref}

\numberwithin{equation}{section}

\newtheorem{theo}{Theorem}[section]

\newtheorem{lem}[theo]{Lemma}
\newtheorem{cor}[theo]{Corollary}
\newtheorem{prop}[theo]{Proposition}
\theoremstyle{remark}
\newtheorem{rmk}[theo]{Remark}

\newtheorem{example}[theo]{Example}
\theoremstyle{definition}
\newtheorem{defi}[theo]{Definition}

\allowdisplaybreaks
\newcommand {\R} {\mathbb{R}}
\newcommand {\gam} {\gamma} 
\newcommand {\gamb} {\gamma^b} 
\newcommand {\tplan} {\pi} 
\newcommand {\phib} {F_b} 
\newcommand {\rhs} {f} 
\newcommand {\Rhs} {F} 
\newcommand {\QQ} {{\Omega\times \R^n}} 
\newcommand {\QQD} {{\Omega\times D}} 
\newcommand {\p} {\partial} %
\newcommand {\Hau} {\mathcal{H}_{d-1}} 
\newcommand {\nO} {n_\Omega} 
\newcommand {\q} {Q} 
\newcommand{\Dir}{\mathrm{Dir}} 
\newcommand{\Div}{\mathrm{div}} 
\newcommand{\EEu} {\mathcal E} 
\newcommand{\EE} {\mathrm E} 
\newcommand{\cM}{\mathcal{M}} 
\newcommand{\cP}{\mathcal{P}} 
\newcommand{\dd}{\mathrm{d}} 

\newcommand {\g} {f_b} 

\begin{document}


\title{Semilinear Neumann boundary value problems \\ for measure-valued maps}
\date{}
\author{Aleksei Kroshnin
	\thanks{Weierstrass Institute, Anton-Wilhelm-Amo-Strasse 39,  10117 Berlin, Germany.
		email: alex.kroshnin@gmail.com
	}
	\and Hugo Lavenant
	\thanks{Department of Decision Sciences and BIDSA, Bocconi University, Via Roentgen 1, 20136 Milano, Italy.
		email: hugo.lavenant@unibocconi.it
	} \and Dmitry Vorotnikov
	\thanks{University of Coimbra, CMUC, Department of Mathematics, 3001-501 Coimbra, Portugal.
		email: mitvorot@mat.uc.pt
	}
}

\maketitle

\begin{abstract}
We study the Wasserstein lift of nonlinear Poisson systems with inhomogeneous Neumann boundary conditions. We establish the existence of minimizers using new estimates involving the 
$p$-moments of measure-valued traces. We also derive the optimality conditions, which turn out to be matrix-valued generalizations of pressureless Euler equations. While deterministic minimizers exist under suitable convexity assumptions and for one-dimensional source domains, we show that genuinely measure-valued minimizers arise in higher dimensions. In particular, we construct examples in which the lifted and classical variational problems have different minimum values, thereby precluding the solutions to the lifted problem from being deterministic, despite the absence of any a priori mechanism enforcing measure-valued behavior. To the best of our knowledge, this phenomenon is new in the nonlinear elliptic theory. 
\end{abstract}

\vspace{1cm}

Keywords: optimal transport, lift, Wasserstein space, nonlinear Poisson system

\textbf{MSC [2020]: 35J20, 35J61, 49Q20, 49Q22, 53C23}

\section{Introduction}

The study of PDEs valued in Riemannian manifolds is a rich and fascinating area, bringing together analysis and geometry in a particularly subtle way \cite{Eells64, SU82, Struwe97, Lin2008, Tataru11}. From this viewpoint, even simple PDEs, such as the Laplace equation, acquire an intrinsically nonlinear character. The situation becomes even more intriguing when the smooth structure of a manifold is replaced by a general metric space, possibly endowed with additional geometric features, such as CAT structures \cite{KS93, Jost94, LMTV24, Mondino2022, Lin2026}.

On the other hand, the theory of Monge–Kantorovich optimal transport provides a remarkable example of a metric space. It is the Wasserstein space, whose points are probability measures. It can be viewed both as a geodesic space with non-negative Alexandrov curvature and, via Otto calculus \cite{Otto01}, as a formal infinite-dimensional Riemannian manifold \cite{Lott07}. This has fueled many advances across PDEs (in particular, in connection with gradient flows on this space), geometric analysis, probability, and data science, cf. \cite{Villani03,Villani09,S15,FiG21,Rig25}.

Even without invoking explicit Otto calculus or metric geometry, optimal transport provides a canonical framework for lifting mathematical objects (such as functionals or equations) to their blurred, measure-valued counterparts; see, for instance, \cite{HL23}. Notably, already more than 25 years ago, Brenier \cite{B01} proposed studying harmonic maps with values in the Wasserstein space. Such harmonic maps were studied in depth by the second author in \cite{HL19}. In particular, it was shown that Brenier’s notion of harmonic maps coincides with the abstract metric-geometric notion introduced in \cite{KS93,Jost94}. It was also proved in \cite{HL19,LMTV24} that those maps possess the so-called Ishihara property, meaning they pull back geodesically convex functions onto subharmonic ones.

In view of the above observations, a natural direction, which we pursue in this paper, is to move beyond the Laplace equation and study semilinear elliptic systems with a variational structure. We will view these systems as taking values, after lifting, in the Wasserstein space of probability measures $(\cP(\R^n),W_2)$, regarded as an extended metric space. For brevity, we will sometimes refer to our construction, based on optimal-transport geometry, as the Wasserstein lift.

Our additional motivations (in particular, for considering zero-order nonlinearities and inhomogeneous Neumann boundary conditions for our lifted problems) are as follows. Firstly, as noted by one of the authors in \cite[Section 6.5]{kroshnin}, very similar problems, subject to an additional ``transportation'' constraint, arise from a certain Kantorovich formulation of regularized optimal transport (with gradient penalization in the spirit of \cite{Louet16}). Secondly, when the source domain is one-dimensional (i.e., a segment), related problems involving a measure-valued Dirichlet boundary condition imposed at one endpoint and an inhomogeneous Neumann boundary condition at the other (sometimes referred to as ballistic optimal transport) have been investigated in \cite{CMP18,V22,A23,V25,MST26,V26}. They belong to a broad class of minimax problems whose underlying framework was originally developed by Brenier through the dual formulation of nonlinear PDEs, see Remark~\ref{brenier} below for a more detailed discussion. Moreover, Brenier \cite{Bvac} derived variational problems related to the latter ones, with a multidimensional source and values in matrix-valued measures, from the Einstein equations of general relativity in vacuum.

To keep the presentation simple, we will restrict our attention to the Wasserstein lift of the variational formulation of the  inhomogeneous Neumann problem for the semilinear Poisson system \begin{equation}
	\label{eq:EL_cl}
	\begin{cases}
		-\Delta u(x)=f(x,u(x)) & \text{in } \Omega\subseteq \R^d, \\
		\displaystyle{\frac {\p u }{\p \nO}(x)=\g(x,u(x))} & \text{on } \p \Omega,
	\end{cases}
\end{equation} without imposing any additional constraints. Of course, this class of models has various applications across disciplines such as physics, engineering, chemistry and biology, cf. \cite{Roub,MMM}. In the classical formulation, the unknown function $u(x)$ and the data $f$ and $\g$ take values in $\R^n$. In our measure-valued framework, the values of the unknown function are probability distributions on $\R^n$, with the Dirac lift $\delta_{u(x)}$ corresponding to the ``deterministic'' case. This measure-valued setting should not be confused with elliptic problems with rough data, in which $f$ is a measure on $\Omega$, see, e.g., \cite{MMM}.

Unlike the measure-valued Dirichlet problems studied in \cite{B01,HL19} and the problems with mixed, ballistic boundary conditions considered in \cite{CMP18,V22}, together with subsequent developments, the inhomogeneous Neumann problem contains no obvious a priori mechanism forcing the solution to be genuinely measure-valued. It is therefore natural to expect that the lifted problem might have deterministic optimizers. Our key finding is that this intuition fails as soon as the source domain $\Omega$ is not one-dimensional. In the purely Neumann setting, the lifted problem can be free of deterministic minimizers. In fact, we construct examples in which minimizers exist, but none can be deterministic. In particular, although a classical minimizing solution $u(x)$ also exists, the corresponding Dirac-valued map $\delta_{u(x)}$ is not a minimizer of the lifted problem.

In Section~\ref{s:remin}, we briefly recall some basic facts about measures, convergence and compactness. In Section~\ref{s:remmeas}, we describe the measure-valued maps that will be the objects of our study, together with their representation as measures on the product space. In Section~\ref{s:gke}, we introduce a generalized continuity equation and show that certain fiberwise averages of measure-valued maps are weakly differentiable, allowing us to handle variations of such maps. In Section~\ref{s:dene}, we present the definition of the Dirichlet energy for measure-valued maps based on the Wasserstein lift (\cite{HL19}, see also \cite{B01}), and discuss its basic properties. It was shown in \cite{HL19} that this definition is equivalent to the metric-geometric definitions of Korevaar--Schoen \cite{KS93} and Jost \cite{Jost94}, and is compatible with the theory developed by Reshetnyak \cite{Reshetnyak97,Reshetnyak04}. In Section~\ref{s:tracem}, we define the boundary trace of a measure-valued map with finite Dirichlet energy, characterize it through the continuity equation, and prove its stability under weak convergence. We also derive important estimates involving the $p$-moments of the traces, which are needed to establish the existence of our lifted Neumann problem. This problem, which is the main focus of our paper, is introduced in Section~\ref{clas-lif}, following a discussion of the classical variational formulation of~\eqref{eq:EL_cl}. In Section~\ref{s:exis}, we prove existence of a solution to the lifted problem. The main difficulty lies in establishing coercivity of the energy, whereas lower semicontinuity is comparatively more straightforward and follows from the lifted Carathéodory framework discussed in detail in Appendix~\ref{sec:appendix_caratheodory}. In Section~\ref{sec-opt}, we derive the optimality conditions~\eqref{eq:optimality_int_acc}, which can be viewed as a matrix-valued generalization of the pressureless Euler equations, also known as the sticky particle system \cite{ERS96,BG98,Villani03}. We also show that our measure-valued solutions satisfy a suitable renormalized formulation in the spirit of DiPerna–Lions \cite{DiPerna-Lions89,DiPernaLions89A}. In Section~\ref{rem:dip}, we compare our measure-valued solutions obtained via the Wasserstein lift with another notion of measure-valued solution inspired by DiPerna's ideas in \cite{DiPerna85}. We argue that our approach, including our way of ``computing the Laplacian'', is fundamentally more geometric and captures substantially more information in a finer and more precise manner. Moreover, our motivation differs from that of DiPerna's original work and subsequent developments concerning Young measure solutions, such as \cite{DiPernaM87,MNRR96}. \textbf{Our objective is not merely to solve the relatively simple PDE~\eqref{eq:EL_cl} in a relaxed sense. Rather, we seek to solve its Wasserstein lift, which, generally speaking, constitutes a genuinely different problem}.
However, in Section~\ref{s:consis}, we show that when the energy is convex in a suitable sense, passing from the classical problem to the lifted one does not yield any novelty. More precisely, in this case, whenever a classical solution exists, there is at least one deterministic solution of the lifted problem. The same holds when $\Omega$ is one-dimensional, as proved in Section~\ref{s:1d}. Finally, in Section~\ref{s:measurev}, as anticipated above, we provide two examples of genuinely measure-valued nature. In these examples, both the non-lifted minimization problem~\eqref{e:mainnotlifted} and the lifted minimization problem~\eqref{e:mainlifted} admit solutions, but their optimal values differ, hence the solutions of the lifted problem cannot be deterministic.

\section{Preliminaries} \label{pre}


\subsection{Reminders of measure theory and topology} \label{s:remin}

For a Polish space $\Theta$ (endowed with its Borel-$\sigma$ algebra), meaning a complete separable metric space, we denote by $\cM(\Theta)$ and $\cP(\Theta)$, respectively, the sets of finite signed measures and probability measures. If $\sigma \in \cM(\Theta)^n$ is a vectorial measure and $\| \cdot \|$ is a norm over $\R^n$, we denote by $\| \sigma \|$ the total variation measure; it is the unique non-negative scalar measure on $\Theta$ such that $\sigma = f \| \sigma \|$, with $f : \Theta \to \R^n$ a vector-valued function which is of unit norm. The push-forward of a measure $\sigma$ by a measurable map $f$ is denoted by $f_\# \sigma$, defined as $(f_\# \sigma)(A) = \sigma(f^{-1}(A))$ for any measurable set.

The notations $C_c(\Theta)$ and $C_b(\Theta)$ denote respectively the spaces of continuous compactly supported and continuous bounded functions. Let $C_0(\Theta)$ be the closure of $C_c(\Theta)$ in the strong topology of $C_b(\Theta)$. The narrow, or weak, topology on $\cM(\Theta)$ is the coarsest one making the map $\sigma \mapsto \int f \, \dd \sigma$ continuous for $f \in C_b(\Theta)$. The weak-$\star$ convergence is the coarsest one making the map $\sigma \mapsto \int f \, \dd \sigma$ continuous for $f \in C_0(\Theta)$.  Clearly from the definition, narrow convergence implies weak-$\star$ convergence.

We recall some useful criteria for relative compactness. First, a subset $\mathcal{K} \subseteq \cM(\Theta)$ is relatively compact for the topology of weak-$\star$ convergence if and only if $\sup_{\sigma \in \mathcal{K}} \| \sigma \|(\Theta) < + \infty$, cf. \cite[Theorem 1.59]{AFP}. For the space of probability measures with narrow convergence, we use the following characterization of tightness: a subset $\mathcal{K} \subseteq \cP(\Theta)$ is relatively compact for the topology of narrow convergence if and only if there exists a function $g : \Theta \to [0, + \infty]$ with compact sublevel sets such that 
$\sup_{\sigma \in \mathcal{K}} \int g(\theta) \dd \sigma(\theta) < + \infty$, \cite[Remark 5.1.5]{AmbrosioGigliSavare08}.

\subsection{Measure-valued maps}
\label{s:remmeas}

Let $\Omega\subseteq \R^d$ be an open bounded domain with Lipschitz boundary $\p \Omega$ and normalized to have unit volume. The unit outward normal at $x \in \p \Omega$ is $\nO(x)$. We write $\Hau$ for the $(d-1)$-dimensional surface measure on its boundary. We consider maps valued in the space of probability distributions: our main object of study will be $\gam$ a measurable map from $\Omega$ to $\cP(\R^n)$, only defined for a.e.\ $x \in \Omega$. We write $\gam \in L^0(\Omega, \cP(\R^n))$, and we denote by $\gam_x \in \cP(\R^n)$ the evaluation of $\gam$ at $x$. To every $\gam \in L^0(\Omega, \cP(\R^n))$ we can associate the measure $\pi = \dd \gam_x \dd x$ as a measure on $\Omega \times \R^n$, namely $\dd \pi(x,y) = \dd \gam_x(y) \dd x$. As $\Omega$ has unit volume, $\pi$ is a probability measure. It has first marginal $\dd x$ the Lebesgue measure on $\Omega$. Conversely, if $\pi$ is a probability measure on $\QQ$ whose first marginal is the Lebesgue measure, by the disintegration theorem \cite[Theorem 5.3.1]{AmbrosioGigliSavare08}, there exists a unique $\gam \in L^0(\Omega, \cP(\R^n))$ such that $\pi = \dd \gam_x \dd x$. 

If $(\gam_m)_{m \geq 1}$ is a sequence of maps in $L^0(\Omega,\cP(\R^n))$, we say that $\gamma_m$ converges weakly to $\gam \in L^0(\Omega,\cP(\R^n))$ if $(\gam_m)_x \dd x$, as a measure on $\QQ$, converges narrowly (i.e., in duality with $C_b(\QQ)$) to $\gam_x \dd x$.

We will also need maps valued in the space of matrix-valued measures: we denote them by $\q \in L^1(\Omega, \cM(\R^n)^{dn})$. Here it means that we have a measurable collection $(\q_x)_{x \in \Omega}$ of matrix-valued measures, such that the map $x \mapsto \| \q_x \|(\R^n)$ is integrable. Equivalently, it means that the measure $\dd \q_x \dd x$, as a matrix-valued measure on $\QQ$, has finite mass. For such a $\q$ and a function $\psi\in C_b(\QQ,\R^n)$, we abuse notation and write 
\[\int_{\R^n} \psi(x, y) \dd \q_x(y)\] 
instead of 
\[\int_{\R^n}
\psi(x, y) \dd \q^\top_x(y).\]

\subsection{The generalized continuity equation}
\label{s:gke}

We denote by $A : B = \mathrm{Tr}(A^\top B)$ the Frobenius inner product between two matrices, and $\| A \| = \sqrt{ A : A}$ the Frobenius norm. 

\begin{defi}
	Let $\gam \in L^0(\Omega, \cP(\R^n))$ and $\q \in L^1(\Omega, \cM(\R^n)^{dn})$. We say that the pair $(\gam,\q)$ satisfies in a weak sense the generalized continuity equation
	\begin{equation}
		\label{eq:CE}
		\nabla_x \gam + \Div_y \q = 0
	\end{equation}
	if and only if, for every test function $\varphi \in C^1_c(\QQ)^d$, we have 
	\begin{equation}
		\label{eq:CE_weak}
		\iint_\QQ \Div_x \varphi \, \dd \gam_x \dd x + \iint_\QQ \nabla_y \varphi : \dd \q_x \, \dd x = 0.
	\end{equation}
\end{defi}

As $\dd\gam_x \dd x$ and $\dd \q_x \dd x$ are finite measures, by a truncation argument it is easy to see that~\eqref{eq:CE_weak} is still valid if $\varphi \in C^1_b(\QQ)$, the space of functions over $\QQ$ which are bounded and with continuous bounded derivative. 

We report a direct corollary of this generalized continuity equation: for a test function $\varphi$, the function $x \mapsto \int_{\R^n} \varphi(x,y) \dd \gam_x (y)$ can be differentiated weakly. We only state a result without the sharpest growth assumption, and we refer to \cite[Chapter 6]{kroshnin} for extensions.

\begin{lem}\label{lem:weak_der_int}
	Let $\gam \in L^0(\Omega, \cP(\R^n))$ and $\q \in L^1(\Omega, \cM(\R^n)^{dn})$ satisfy the generalized continuity equation~\eqref{eq:CE}. 
	Fix $\varphi \in C^1_b(\QQ)$ bounded and with bounded continuous derivative. Define the function $g$ as, for $x \in \Omega$,
	\[
	g(x) = \int_{\R^n} \varphi(x, y) \dd \gam_x(y). 
	\]
	Then $g$ belongs to the space $(W^{1,1}\cap L^\infty)(\Omega)$ with derivative
	\begin{equation}\label{eq:weak_der_int}
		\nabla g(x) = \int_{\R^n} \nabla_x \varphi(x, y) \, \dd \gam_x(y) + \int_{\R^n} \nabla_y \varphi(x, y) \dd \q_x(y).
	\end{equation}
\end{lem}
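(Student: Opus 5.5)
The plan is to test the weak continuity equation~\eqref{eq:CE_weak} against products of $\varphi$ with a test function of $x$ only. This gives the distributional derivative of $g$ directly; integrability and boundedness then come from the finiteness of the measures involved.

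First, $g$ is well-defined, measurable, and in $L^\infty(\Omega)$. Measurability follows from the measurability of $x\mapsto\gam_x$: for $\varphi$ bounded continuous, $x\mapsto\int\varphi(x,y)\,\dd\gam_x(y)$ is measurable by the disintegration theorem, for instance by approximating $\varphi$ with finite sums of products $a(x)b(y)$ or by the standard monotone class argument. Boundedness is immediate, since $|g(x)|\le\|\varphi\|_\infty$ because $\gam_x$ is a probability measure. Likewise, the candidate derivative $G(x)$, the right-hand side of~\eqref{eq:weak_der_int}, is measurable with
\[
|G(x)|\le \|\nabla_x\varphi\|_\infty+\|\nabla_y\varphi\|_\infty\,\|\q_x\|(\R^n).
\]
This bound is integrable over $\Omega$ because $\Omega$ has unit volume and $\q\in L^1(\Omega,\cM(\R^n)^{dn})$. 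Hence $G\in L^1(\Omega)^d$.

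Next, fix $\zeta\in C^1_c(\Omega)$ and a coordinate $i\in\{1,\dots,d\}$. Plug into~\eqref{eq:CE_weak} the vector field $\Phi(x,y)=\zeta(x)\varphi(x,y)e_i$. This field is in $C^1_b(\QQ)^d$, and it is compactly supported in $x$ but not in $y$. We then use the extension noted after the definition, namely that~\eqref{eq:CE_weak} holds for $C^1_b$ test functions. If one wants to justify that extension, multiply by cutoffs $\chi_R(y)=\chi(y/R)$ and let $R\to\infty$. The extra term involves $\nabla\chi_R=O(1/R)$ against $\|\q_x\|\,\dd x$, which is finite, so it vanishes; the remaining terms converge by dominated convergence.

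With this choice, $\Div_x\Phi=\p_i\zeta\,\varphi+\zeta\,\p_{x_i}\varphi$ and $\nabla_y\Phi=\zeta\,e_i\otimes\nabla_y\varphi$. Substituting, and using Fubini (justified since everything is bounded against finite measures), gives
\[
\int_\Omega \p_i\zeta\, g\,\dd x = -\int_\Omega \zeta(x)\Big(\int\p_{x_i}\varphi\,\dd\gam_x+\int\nabla_y\varphi\cdot \dd(\q_x)_{i\cdot}\Big)\dd x .
\]
This identity says exactly that $\p_i g=G_i$ in the sense of distributions. The notational convention with $\q^\top$ simply fixes which index of $\q$ pairs with $\nabla_y\varphi$, and matches~\eqref{eq:weak_der_int}.

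Combining the two steps, $g\in L^\infty$ and its distributional gradient $G$ is in $L^1$, so $g\in W^{1,1}\cap L^\infty(\Omega)$ with derivative~\eqref{eq:weak_der_int}.

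The only delicate step is the extension of~\eqref{eq:CE_weak} from $C^1_c(\QQ)$ to test functions that are merely bounded in $y$. The paper asserts it via truncation, so it can be taken as given; otherwise the cutoff argument sketched above suffices. Keeping track of the index convention for $\q$ is a secondary, purely bookkeeping point.
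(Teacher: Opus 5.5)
Your proof is correct and follows the same route as the paper. You test \eqref{eq:CE_weak} with $\zeta(x)\varphi(x,y)e_i$, which is the paper's test function $\eta(x)\varphi(x,y)$ with $\eta\in C^1_c(\Omega)^d$, and your $L^\infty$ and $L^1$ bounds likewise come from the bounds on $\varphi$ and the finite mass of $\q$. Your cutoff in $y$ only, with compact support kept in $x$, is exactly the extension this lemma needs. It is in fact more careful than the paper's blanket claim for all of $C^1_b(\QQ)$, where boundary terms as in \eqref{eq:CE_with_b} would otherwise appear.
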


\begin{proof}
	Fix an arbitrary $\eta \in C_c^1(\Omega)^d$. Then
	\begin{align*}
		\int_\Omega g(x) \Div_x \eta(x) \dd x &= \iint_{\QQ} \varphi(x,y) \Div_x \eta(x)  \dd \gam_x \dd x \\
		& = \iint_{\QQ}  \Div_x ( \varphi(x,y) \eta(x))   \dd \gam_x \dd x - \iint_{\QQ}  \nabla_x \varphi(x,y) \cdot \eta(x)  \dd \gam_x(y) \dd x \\
		& = - \int_{\Omega} \eta(x) \cdot \left( \int_{\R^n} \nabla_y \varphi(x,y)  \dd \q_x(y) + \int_{\R^n} \nabla_x \varphi(x,y)   \dd \gam_x(y) \right) \dd x,
	\end{align*}
	where the last equality follows from the continuity equation with test function $\eta(x) \varphi(x,y)$.
	Therefore, $g$ is weakly differentiable with the weak derivative given by~\eqref{eq:weak_der_int}. The fact that $g$ and $\nabla g$ belong to $L^\infty(\Omega)$ and $L^1(\Omega)$, resp., follows from the boundedness assumptions on $\varphi$.
\end{proof}

\subsection{The Dirichlet energy} \label{s:dene}

Recall that $\| \cdot \|$ is the Frobenius norm for matrices.

\begin{defi}
	If $\gam \in L^0(\Omega, \cP(\R^n))$, we define
	\begin{equation}
		\label{eq:def_Dirichlet}
		\Dir(\gam) = \inf_\q \int_\Omega \left( \int_{\R^n} \left\|\frac{\dd \q_x}{\dd \gam_x}(y) \right\|^2 \dd \gam_x(y)  \right) \dd x, 
	\end{equation}
	where the infimum is taken over all measurable maps $\q \in L^1(\Omega,\cM(\R^n)^{dn})$ such that $\q_x \ll \gam_x$ for a.e.\ $x$, and such that the equation~\eqref{eq:CE} is satisfied in a weak sense. 
\end{defi}

\begin{prop}[{\cite[Lemma 4.6]{HL23}}]
	If $\Dir(\gam) < + \infty$ then the infimum in~\eqref{eq:def_Dirichlet} is attained, for a field $(\q_x)_x$ which is unique $x$-a.e.
\end{prop}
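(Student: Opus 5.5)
The plan is the direct method for existence, combined with strict convexity of the Benamou--Brenier-type integrand for uniqueness. It is convenient to rewrite the problem in terms of the measures $\pi = \dd\gam_x\dd x$ on $\QQ$ and $\mu = \dd \q_x \dd x$, a matrix-valued measure on $\QQ$. The objective is then
\[
\mathcal{B}(\mu) = \iint_\QQ \left\| \frac{\dd \mu}{\dd \pi} \right\|^2 \dd \pi ,
\]
with the convention $\mathcal{B}(\mu)=+\infty$ if $\mu \not\ll \pi$. This is the usual jointly convex, one-homogeneous functional $(\pi,\mu)\mapsto \iint \|\dd\mu/\dd\pi\|^2\dd\pi$, evaluated at a fixed $\pi$. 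The constraint~\eqref{eq:CE_weak} is linear in $\mu$ and closed under weak-$\star$ convergence of $\mu$, since the test functions $\varphi \in C^1_c(\QQ)^d$ are admissible against weak-$\star$ limits. Because $\pi$ has first marginal the Lebesgue measure, the condition $\q_x \ll \gam_x$ for a.e.\ $x$ is equivalent to $\mu \ll \pi$. The densities agree as well: $\frac{\dd \mu}{\dd \pi}(x,y) = \frac{\dd \q_x}{\dd\gam_x}(y)$ for $\pi$-a.e.\ $(x,y)$. Consequently the admissible set is exactly $\{v \in L^2(\pi;\R^{d\times n}) : \mu = v\pi \text{ satisfies } \eqref{eq:CE_weak}\}$, and the cost is $\|v\|^2_{L^2(\pi)}$.

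The problem then reduces to minimizing a Hilbert norm over a set in $L^2(\pi;\R^{d\times n})$. This set is nonempty because $\Dir(\gam)<+\infty$. It is affine, since~\eqref{eq:CE_weak} is affine in $v$, being the homogeneous equation $\iint \nabla_y\varphi : v\,\dd\pi = -\iint \Div_x\varphi\,\dd\pi$. It is also closed, even weakly closed in $L^2(\pi)$: for $\varphi\in C^1_c(\QQ)^d$ the map $v\mapsto \iint \nabla_y\varphi : v \,\dd\pi$ is a continuous linear functional on $L^2(\pi)$, because $\nabla_y\varphi$ is bounded and $\pi$ is finite. The admissible set is therefore a nonempty closed affine subspace, that is, a translate of a closed linear subspace. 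Minimizing the norm over it amounts to projecting $0$ onto a closed convex set in a Hilbert space, which has a unique solution $v^\ast$.

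One must also check that the resulting $\q$, with $\dd\q_x = v^\ast(x,\cdot)\dd\gam_x$, belongs to $L^1(\Omega,\cM(\R^n)^{dn})$. This follows from Cauchy--Schwarz:
\[
\int_\Omega \|\q_x\|(\R^n)\dd x = \iint \|v^\ast\|\dd\pi \le \|v^\ast\|_{L^2(\pi)} .
\]
Measurability in $x$ follows from disintegration.

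Finally, uniqueness in $L^2(\pi)$ translates into uniqueness of $\q_x$ for a.e.\ $x$. If two minimizers $\q,\q'$ gave densities $v=v'$ $\pi$-a.e., then $\q_x=\q'_x$ for a.e.\ $x$ by Fubini. The only point requiring care is the identification of the densities $\frac{\dd\mu}{\dd\pi}$ with the fiberwise densities $\frac{\dd\q_x}{\dd\gam_x}$ and the resulting measurability. This is routine via disintegration, and it is where I would spend the little effort needed; there is no genuine obstacle here.
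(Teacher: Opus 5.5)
Your argument is correct. The paper gives no proof of its own and simply cites HL23; your route is the standard one behind that result. You identify admissible fluxes with densities $v\in L^2(\pi)$, so that the constraint set becomes a nonempty closed affine subspace, and then the Hilbert projection theorem (equivalently, the direct method plus strict convexity of $\|v\|_{L^2(\pi)}^2$) gives existence and $x$-a.e.\ uniqueness.

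One cosmetic slip: the constraint $\iint \nabla_y\varphi : v\,\dd\pi = -\iint \Div_x\varphi\,\dd\pi$ is inhomogeneous, not ``homogeneous''. That is precisely why the admissible set is affine rather than linear.
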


\noindent We call the minimal $\q$ the \emph{tangent} flux to $\gam$.

\begin{rmk}
	\label{rem:tangent_gradient}
	Calling $v_x \coloneqq \frac{\dd \q_x}{\dd \gam_x}$, where $\q$ is the tangent flux, we can characterize $(x,y) \mapsto v_x(y)$ as a field that belongs to the closure of the set
	$\{ \nabla_y \varphi(x,y) \ : \ \varphi \in C^1_c(\QQ, \R^d) \}$
	in the space $L^2(\QQ, \dd \gam_x \dd x)^{dn}$: see \cite[Proposition 3.11]{HL19}.
\end{rmk}

\begin{rmk}
	If $\gam_x$ is supported in a fixed compact set independent of $x \in \Omega$, \cite[Theorem 3.26]{HL19} shows that $\Dir$ coincides with the Dirichlet energy, in the metric-geometric sense of \cite{KS93,Jost94}, of $\gam$ seen as a map valued in the Wasserstein metric space $(\mathcal{P}_2(\R^n),W_2)$. In the present study, we do not impose any moment condition on $\gam_x$, so our map rather takes values in the extended Wasserstein space $(\mathcal{P}(\R^n),W_2)$, cf. \cite{Lisini16,Lisini07}, where the distance $W_2$ may possibly be infinite. We will not further expand on the metric point of view, as it does not help us prove existence or derive the optimality conditions. Nevertheless, these facts, together with the lifting identity~\eqref{eq:lifting_identity} below, explain why we call $\Dir$ the ``Wasserstein lift'' of the classical Dirichlet energy.
\end{rmk}

\noindent Eventually we report here that $\Dir$ is a suitable functional to be minimized in the sense of calculus of variations.

\begin{theo}[{\cite[Proposition 4.7]{HL23}}]\label{t:dir}
	The functional $\Dir$ is convex and lower semi-continuous with respect to weak convergence.
\end{theo}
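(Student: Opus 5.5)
The plan is to write $\Dir$ as a convex, lower semicontinuous function of the pair of measures $(\pi, \Sigma) = (\dd\gam_x \dd x, \dd \q_x \dd x)$, and to view the continuity-equation constraint as linear and closed. Convexity then comes from joint convexity of this integrand and linearity of the constraint. Lower semicontinuity comes from weak-$\star$ compactness of the fluxes together with lower semicontinuity of the integrand.

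First, I introduce the Benamou--Brenier type functional
\[
\mathcal{B}(\pi,\Sigma) = \iint_\QQ \Big\| \frac{\dd\Sigma}{\dd\pi}\Big\|^2 \dd\pi \quad \text{if } \Sigma \ll \pi,
\]
and $\mathcal{B} = +\infty$ otherwise. I use the dual representation
\[
\mathcal{B}(\pi,\Sigma) = \sup\Big\{ \iint a \,\dd\pi + \iint b : \dd\Sigma \ : \ (a,b) \in C_b(\QQ, K) \Big\},
\]
where $K = \{(a,b) \in \R \times \R^{dn} : a + \tfrac14\|b\|^2 \le 0\}$. This is the standard fact that $(s,m) \mapsto \|m\|^2/s$ is the support function of $K$. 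The supremum is over a fixed family of linear functionals, continuous in duality with $C_b$ (and even with $C_0$). Hence $\mathcal{B}$ is jointly convex and lower semicontinuous for weak-$\star$ convergence of $\Sigma$ and narrow convergence of $\pi$.

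Moreover $\Dir(\gam) = \inf\{\mathcal{B}(\pi,\Sigma)\}$, where the infimum is over all $\Sigma$ satisfying \eqref{eq:CE_weak}. The disintegration $\Sigma = \dd\q_x\dd x$ is automatic, since the first marginal of $\|\Sigma\| \ll \pi$ is absolutely continuous with respect to $\dd x$.

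For convexity, take $\gam^0, \gam^1$ with finite energy and optimal fluxes $\q^0, \q^1$, and form the convex combination $\gam^t = (1-t)\gam^0 + t\gam^1$. The flux $(1-t)\q^0 + t\q^1$ satisfies \eqref{eq:CE} with $\gam^t$ by linearity of \eqref{eq:CE_weak}. It is also admissible, since it is absolutely continuous with respect to $\gam^t$. Joint convexity of $\mathcal{B}$ then gives $\Dir(\gam^t) \le (1-t)\Dir(\gam^0) + t\Dir(\gam^1)$.

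For lower semicontinuity, let $\gam_m \to \gam$ weakly with $\liminf \Dir(\gam_m) < \infty$. Pass to a subsequence realizing the liminf, and let $\q_m$ be the optimal fluxes (they exist by the Proposition). By Cauchy--Schwarz, $\|\Sigma_m\|(\QQ) \le \mathcal{B}(\pi_m,\Sigma_m)^{1/2}$, because $\pi_m$ is a probability measure. The masses are therefore bounded, and by the compactness criterion recalled in Section~\ref{s:remin} a further subsequence gives $\Sigma_m \to \Sigma$ weak-$\star$.

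The main obstacle is to pass to the limit in the constraint \eqref{eq:CE_weak}, since weak-$\star$ convergence alone may lose mass at $y$-infinity. However, test functions $\varphi \in C^1_c(\QQ)^d$ have $\nabla_y\varphi \in C_c \subseteq C_0$, so the flux term converges. The $\gam$ term converges by narrow convergence, because $\Div_x\varphi \in C_b$. Hence $(\gam,\Sigma)$ satisfies \eqref{eq:CE}. Lower semicontinuity of $\mathcal{B}$ yields $\mathcal{B}(\pi,\Sigma) \le \liminf \mathcal{B}(\pi_m,\Sigma_m)$. Finiteness of this bound forces $\Sigma \ll \pi$, so $\Sigma$ disintegrates as an admissible $\q$, and therefore $\Dir(\gam) \le \liminf_m \Dir(\gam_m)$.
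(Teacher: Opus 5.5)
Your proof is correct and follows essentially the standard argument behind the cited result: joint convexity and lower semicontinuity of the Benamou--Brenier functional $\mathcal{B}$, linearity of the constraint \eqref{eq:CE_weak}, and mass bounds on the tangent fluxes that let you pass to a weak-$\star$ limit in the constraint (the same scheme as Step 2 in the proof of Theorem~\ref{theo:boundary_values}). The one point to state precisely is that the supremum in your dual formula can be restricted to $(a,b)\in C_c(\QQ;K)$, which is allowed because $0\in K$ permits cut-offs; it is this version, not the $C_b$ one, that makes $\mathcal{B}$ lower semicontinuous when $\Sigma$ converges only weakly-$\star$.
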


\subsection{Trace of measure-valued maps} \label{s:tracem}

We move to the definition of the (boundary) trace of a measure-valued map. The existence and stability of the trace was already proved in \cite{HL19} under an additional compactness assumption, and the present proof follows exactly the same path. We also present new trace estimates which will be crucial to prove well-posedness of our problem. 

\begin{theo}
	\label{theo:boundary_values}
	Assume $\gam \in L^0(\Omega,\cP(\R^n))$ and $\Dir(\gamma) < + \infty$. Then there exists a measure-valued map $\gamb \in L^0(\p \Omega, \cP(\R^n))$, uniquely determined by $\gam$, such that the following holds: for any $\q \in L^1(\Omega,\cM(\R^n)^{dn})$ such that~\eqref{eq:CE} holds, for all $\varphi \in C^1_b(\bar{\Omega} \times \R^n)^d$ 
	\begin{equation}
		\label{eq:CE_with_b}
		\iint_\QQ \Div_x \varphi \, \dd \gam_x \dd x + \iint_\QQ \nabla_y \varphi : \dd \q_x \, \dd x = \int_{\p \Omega}  \int_{\R^n} \varphi(x,y) \cdot \nO(x) \, \dd \gamb_x(y) \dd \Hau(x).
	\end{equation}
	In addition, if $(\gam_m)_{m \geq 1}$ is a sequence in $L^0(\Omega, \cP(\R^n))$ converging weakly to $\gamma$ and with $\Dir(\gam_m)$ uniformly bounded in $m$, then $(\gamma_m)^b_x \dd \Hau(x)$ converges weakly to $\gamb_x \dd \Hau(x)$ as $m \to + \infty$, in duality with $C_b(\p \Omega \times \R^n)$. 
\end{theo}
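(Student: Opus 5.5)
The plan is to define the boundary term as a linear functional and show it is represented by a probability-valued map on $\p\Omega$. Fix $\q$ with~\eqref{eq:CE} and finite energy; for instance $\q$ can be the tangent flux. For $\varphi \in C^1_b(\bar\Omega\times\R^n)^d$ set
\[
L(\varphi) = \iint_\QQ \Div_x \varphi \, \dd \gam_x \dd x + \iint_\QQ \nabla_y \varphi : \dd \q_x \, \dd x .
\]
By~\eqref{eq:CE_weak}, $L$ vanishes when $\varphi$ is compactly supported in $\QQ$. I expect $L(\varphi)$ to depend only on $\varphi|_{\p\Omega\times\R^n}$, and in fact only on the normal component $\varphi\cdot\nO$.

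To see this, I would reduce to scalar functions via Lemma~\ref{lem:weak_der_int}. Take $\varphi = \eta(x)\psi(x,y)$ with $\eta$ a Lipschitz vector field on $\bar\Omega$ and $\psi\in C^1_b$ scalar. Then
\[
L(\varphi)=\int_\Omega \Div(\eta\, g_\psi)\,\dd x ,
\]
where $g_\psi(x)=\int\psi(x,\cdot)\,\dd\gam_x$ belongs to $W^{1,1}(\Omega)$ with gradient given by~\eqref{eq:weak_der_int}. The divergence theorem for $W^{1,1}$ functions on Lipschitz domains gives
\[
L(\varphi)=\int_{\p\Omega}\eta\cdot\nO\, \mathrm{Tr}(g_\psi)\,\dd\Hau ,
\]
where $\mathrm{Tr}$ is the Sobolev trace. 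This settles independence of $\q$ and uniqueness. Extending from such products to general $\varphi$ is done by density.

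Next I would show that $\psi \mapsto \mathrm{Tr}(g_\psi)(x)$ is given by a probability measure $\gamb_x$. Positivity and $\mathrm{Tr}(g_1)=1$ are clear, since $g_\psi\ge0$ whenever $\psi\ge0$ and the trace preserves this. The main obstacle is tightness, i.e.\ excluding mass escaping to infinity in $y$: a positive normalized functional on $C^1_b$ need not be a measure. I would handle it with a Lipschitz weight $\theta(y)=\sqrt{1+|y|^2}$, or rather its truncations. Lemma~\ref{lem:weak_der_int} and Cauchy–Schwarz give
\[
\|\nabla g_\theta\|_{L^1}\le \int_\Omega \Bigl(\int |v_x|^2\,\dd\gam_x\Bigr)^{1/2}\dd x\le \Dir(\gam)^{1/2}.
\]
This alone does not bound $\|g_\theta\|_{L^1}$, since $\gam$ may lack moments. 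So instead I would use $\theta_R(y)=\min(1,(|y|-R)_+)$ and show that $\int_{\p\Omega}\mathrm{Tr}(g_{\theta_R})\,\dd\Hau\to0$ as $R\to\infty$. The trace inequality
\[
\|\mathrm{Tr}\,g\|_{L^1(\p\Omega)}\le C\bigl(\|g\|_{L^1}+\|\nabla g\|_{L^1}\bigr)
\]
bounds this by
\[
C\iint\theta_R\,\dd\gam_x\dd x + C\iint_{\{|y|>R\}}|v_x|\,\dd\gam_x\dd x ,
\]
and both terms tend to zero by dominated convergence. Using a countable dense family of $\psi$'s together with Daniell/Riesz then yields a measurable family $\gamb_x\in\cP(\R^n)$ for $\Hau$-a.e.\ $x$.

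For stability, take $\gam_m\to\gam$ with $\Dir(\gam_m)\le C$ and fluxes $\q_m$. The measures $\q_m$ have uniformly bounded mass, so up to a subsequence $\dd\q_{m,x}\dd x$ converges weakly-$\star$. The masses of $(\gam_m)^b\dd\Hau$ are fixed, and they are tight by the same uniform estimate as above. The one point needing care is the term with $\theta_R$ in $\|g\|_{L^1}$, which is uniform because $\gam_m\dd x$ converges narrowly and hence is tight. The term involving the flux on $\{|y|>R\}$ needs a uniform bound, which I would obtain by Cauchy–Schwarz as
\[
\Dir^{1/2}\Bigl(\iint_{\{|y|>R\}}\dd\gam_{m,x}\,\dd x\Bigr)^{1/2}.
\]
Passing to the limit in~\eqref{eq:CE_with_b} along any subsequence identifies every limit point as $\gamb$ by uniqueness, and hence the whole sequence converges.
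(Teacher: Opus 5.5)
Your construction of $\gamb$ is sound and uses the same trace mechanism as the paper. The identity $L(\eta\psi)=\int_{\p\Omega}\eta\cdot\nO\,\mathrm{Tr}(g_\psi)\,\dd\Hau$ is that mechanism; the paper works directly with vector-valued $g$ and disintegrates a single global measure instead of your pointwise Riesz construction. Your $\theta_R$ estimate also makes explicit the tightness in $y$ that the paper's proof leaves implicit.

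The gap is in the stability part. You extract a weak-$\star$ limit $\nu$ of $\dd\q_{m,x}\dd x$ using only the uniform mass bound, and then pass to the limit in~\eqref{eq:CE_with_b}. The flux term $\iint\nabla_y\varphi:\dd\q_{m,x}\,\dd x$, however, is tested against functions that do not vanish on $\p\Omega\times\R^n$. In duality with $C_0(\QQ)$ you cannot pass to the limit in it at all. In duality with $C_0(\bar\Omega\times\R^n)$ the limit $\nu$ may charge $\p\Omega\times\R^n$, which adds a spurious term $\int_{\p\Omega\times\R^n}\nabla_y\varphi:\dd\nu$ to the boundary identity. Either way you do not obtain~\eqref{eq:CE_with_b} for $\gam$ with an admissible flux in $L^1(\Omega,\cM(\R^n)^{dn})$, so uniqueness cannot be invoked.

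This is not a technicality. Take $d=n=1$, $\Omega=(0,1)$ and $\gam_{m,x}=\delta_{\min(mx,1)}$. The fluxes have mass $1$ and $\gam_m$ converges weakly to the constant map $x\mapsto\delta_1$, yet $(\gam_m)^b_0=\delta_0$ for every $m$. The fluxes converge to $\delta_{x=0}\otimes\dd y|_{[0,1]}$, which is concentrated on the boundary. Only $\Dir(\gam_m)=m\to\infty$ excludes this example, and your argument never uses the energy bound in the $x$ direction.

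The missing ingredient is equi-integrability in $x$, which is exactly the claim in the paper's second step. Take $\q_m$ to be the tangent flux. Cauchy--Schwarz gives, for every relatively open $A\subset\bar\Omega$,
$\|\dd\q_{m,x}\dd x\|(A\times\R^n)\le|A\cap\Omega|^{1/2}\Dir(\gam_m)^{1/2}$. By lower semicontinuity of total variation on open sets, $\|\nu\|(A\times\R^n)\le C|A|^{1/2}$. Hence:
\begin{itemize}
\item the $x$-marginal of $\|\nu\|$ is absolutely continuous, so in particular it does not charge the Lebesgue-null set $\p\Omega$;
\item $\nu=\dd\q_x\dd x$ with $\q\in L^1(\Omega,\cM(\R^n)^{dn})$, and $(\gam,\q)$ satisfies~\eqref{eq:CE};
\item passing to the limit in~\eqref{eq:CE_with_b} for $\varphi\in C^1_c(\bar\Omega\times\R^n)^d$ becomes legitimate, and your uniform tightness then upgrades the convergence to duality with $C_b$.
\end{itemize}
In your own trace language the fix reads as follows. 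The bound $|\nabla g^m_\psi|\le\|\nabla_x\psi\|_\infty+\|\nabla_y\psi\|_\infty\bigl(\int|v_{m,x}|^2\,\dd\gam_{m,x}\bigr)^{1/2}$ is bounded in $L^2(\Omega)$, hence equi-integrable. So $g^m_\psi\rightharpoonup g_\psi$ weakly in $W^{1,1}(\Omega)$, and the traces converge weakly in $L^1(\p\Omega)$ by continuity of the trace operator. A mere $W^{1,1}$ bound would not give this, just as traces are not weakly continuous in $BV$.
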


\begin{proof}
	\emph{1st step: existence of $\gamb$.}
	Take $\q$ such that the continuity equation~\eqref{eq:CE} is satisfied, as $\Dir(\q) < + \infty$ there exists at least one such $\q$. 
	Let us define $T(\varphi)$, for $\varphi \in C^1_b(\bar{\Omega} \times \R^n)^d$, as the left-hand side of~\eqref{eq:CE_with_b}. It defines $T$ as a distribution over $\bar{\Omega} \times \R^n$. 
	Now fix $\varphi \in C^1_b(\bar{\Omega} \times \R^n)^d$, and consider, for $x \in \Omega$, 
	\begin{equation*}
		g(x) = \int_{\R^n} \varphi(x,y) \dd \gam_x(y).
	\end{equation*}
	By Lemma~\ref{lem:weak_der_int}, we have $g \in (W^{1,1}\cap L^\infty)(\Omega)^d$. In particular, there exists $\bar{g} \in L^\infty(\p \Omega)^d$ that coincides with the trace of $g$ a.e.\ on $\p \Omega$, cf. \cite[Section 4.3]{EG}. From the explicit expression of $\nabla g$ in~\eqref{eq:weak_der_int} and the weak formulation of the trace, we have
	\begin{equation*}
		T(\varphi) = \int_{\Omega} \Div_x g(x) \dd x = \int_{\p \Omega} \bar{g}(x) \cdot n_\Omega(x) \dd \Hau(x). 
	\end{equation*}
	From this we see that $T$ does not depend on the particular choice of $\q$, as long as the continuity equation is satisfied. Moreover, as $\| \bar{g} \|_\infty \leq \| g \|_\infty \leq \| \varphi \|_\infty$, we obtain that $|T(\varphi)| \leq \Hau(\p \Omega) \| \varphi \|_\infty$. That is, $T$ is a distribution of order $0$: it can be represented by a finite vectorial measure $\sigma \in \cM(\QQ)^d$. From the explicit representation of the trace as $\bar{g}(x) = \lim_{r \to 0} \fint_{B_r(x) \cap \Omega} g(x') \dd x'$ for a.e.\ $x \in \p \Omega$, we deduce that $\sigma$ is supported on $\p \Omega \times \R^n$, with direction given by $n_\Omega$, that is, $\sigma = n_\Omega \cdot \tilde{\sigma}$ for a scalar measure $\tilde{\sigma}$ on $\p \Omega \times \R^n$. Testing with functions $\varphi$ such that $\varphi \cdot n_\Omega \geq 0$ on $\p \QQ$, we obtain $\bar{g} \cdot n_\Omega \geq 0$ and thus $T(\varphi) \geq 0$, which implies that $\tilde{\sigma}$ is a non-negative measure. Moreover, using $\varphi = \varphi(x)$ of class $C^1$ over $\bar{\Omega}$ depending only on $x$, so that $g = \varphi$ (and thus $\bar{g} = \varphi$), we obtain that the marginal distribution of $\tilde{\sigma}$ over $\p \Omega$ is $\Hau$. In conclusion, by the disintegration theorem there exists $(\gamb_x)_{x \in \p \Omega}$ a family of probability distributions over $\cP(\R^n)$ such that $\tilde{\sigma} = \dd \gamb_x \dd \Hau(x)$. The conclusion~\eqref{eq:CE_with_b} follows.
	
	\emph{2nd step: stability of the boundary conditions.} Assume that $(\gam_m)_{m \geq 1}$ is a sequence as in the theorem and take $\q_m$ the tangent flux. From the Cauchy-Schwarz inequality, we have $\| \dd (\q_m)_x \dd x\|(\QQ) \leq \sqrt{\Dir(\gam_m)}$, in particular it is bounded independently of $m$. By the criterion for relative compactness in $\cM(\QQ)$ \cite[Theorem 1.59]{AFP}, up to extraction of a subsequence $\dd (\q_m)_x \dd x$ converges weakly-$\star$ to some limit $\nu$. 
	The limit has finite total mass, and we claim that its first marginal must be absolutely continuous with respect to the Lebesgue measure, and
	thus by disintegration $\dd \nu=\dd Q_x \dd x$ for some map $\q \in L^1(\Omega,\cM(\R^n)^{dn})$. 
	
	To prove the claim, using the Cauchy--Schwarz inequality, for any open set $A \subset \Omega$ we estimate
	\begin{align*}
		\|\dd(Q_m)_x\dd x\|(A\times \R^n) & =  \int_A \left(\int_{\R^n} \left\|\frac{\dd \q_x}{\dd \gam_x}\right\| \dd \gam_x \right) \dd x \\
		& \leq \sqrt{|A|} \sqrt{\int_\Omega \left(\int_{\R^n} \left\|\frac{\dd \q_x}{\dd \gam_x}\right\|^2 \dd \gam_x\right) \, \dd x} = \sqrt{|A|} \sqrt{\Dir(\gam_m)}.
	\end{align*}
	By lower semicontinuity of the total variation with respect to weak-$\star$ convergence, we obtain
	$$
	\|\nu\|(A\times \R^n)
	\leq \liminf_{m\to+\infty}\|\dd(Q_m)_x\dd x\|(A\times \R^n)
	\leq C|A|^{\frac12}.
	$$
	This immediately yields the claim above.


	Passing to the limit, we see that $(\gam,\q)$ satisfies the equation~\eqref{eq:CE} weakly. Moreover, passing to the limit in the expression of the distribution $T$ as above, we see that for any $\varphi \in C^1_c(\bar{\Omega} \times \R^n)^d$, 
	\begin{equation*}
		\lim_{m \to + \infty} \iint_{\p \Omega \times \R^n} \varphi \cdot \nO \, \dd (\gamma_m)^b_x \dd \Hau(x) = \iint_{\p \Omega \times \R^n} \varphi \cdot \nO \, \dd \gamb_x \dd \Hau(x)
	\end{equation*}
	As any $\psi \in C^1_c(\p \Omega \times \R^n)$ can be represented as the trace $\varphi \cdot \nO$ for some $\varphi \in C^1_c(\bar{\Omega} \times \R^n)^d$,
	\begin{equation*}
		\lim_{m \to + \infty} \iint_{\p \Omega \times \R^n} \psi \, \dd (\gamma_m)^b_x \dd \Hau(x) = \iint_{\p \Omega \times \R^n} \psi \, \dd \gamb_x \dd \Hau(x).  
	\end{equation*}
	As all objects are probability measures, this convergence in duality with $C^1_c$ can be updated to a convergence in duality with $C_c$. 
	Eventually, as we know that $(\gamma_m)^b_x \dd \Hau(x)$ has the same total mass at the limit as $\gamb_x \dd \Hau(x)$, convergence in duality with $C_c$ can be improved to weak convergence, in duality with $C_b$, see \cite[Remark 5.1.6]{AmbrosioGigliSavare08}. 
\end{proof}

\noindent We deduce an immediate strengthening of Lemma~\ref{lem:weak_der_int} which is actually the way $\gamb$ was defined.

\begin{lem}
	\label{lem:weak_der_int_b}
	Under the same notations and assumptions of Lemma~\ref{lem:weak_der_int}, the function $g \in (W^{1,1}\cap L^\infty)(\Omega)$ has trace values given by, for a.e.\ $x \in \p \Omega$, 
	\begin{equation*}
		g(x) = \int_{\R^n} \varphi(x,y) \dd \gamb_x(y). 
	\end{equation*}
\end{lem}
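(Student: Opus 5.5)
The plan is to use the construction in the first step of the proof of Theorem~\ref{theo:boundary_values}. There, for a vector test function, the left-hand side of~\eqref{eq:CE_with_b} was identified with the boundary integral of the trace of $g$ against $\nO$. We want to turn this identity into a pointwise a.e.\ statement for the trace of a scalar $g$.

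Fix the scalar $\varphi \in C^1_b(\QQ)$ as in Lemma~\ref{lem:weak_der_int}. We may assume $\varphi$ extends to $C^1_b(\bar\Omega\times\R^n)$, as is implicit in the statement. Take an arbitrary $\eta \in C^1(\bar\Omega)^d$ and set $\Phi(x,y) = \eta(x)\varphi(x,y)$, which lies in $C^1_b(\bar\Omega\times\R^n)^d$. Let $g$ denote the scalar function of the statement and $\bar g$ its Sobolev trace, which exists since $g \in W^{1,1}(\Omega)$.

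By Lemma~\ref{lem:weak_der_int}, the vector function $G(x)=\int\Phi(x,y)\,\dd\gam_x(y)$ equals $\eta g$ and lies in $W^{1,1}$. Its divergence is exactly the integrand on the left-hand side of~\eqref{eq:CE_with_b} for $\Phi$. The divergence theorem for $W^{1,1}$ functions on the Lipschitz domain $\Omega$ then gives that this left-hand side equals $\int_{\p\Omega}\bar g\,\eta\cdot\nO\,\dd\Hau$. Here we use that the trace of $\eta g$ is $\eta\bar g$, since $\eta$ is continuous up to the boundary. On the other hand, Theorem~\ref{theo:boundary_values} says the same quantity equals
\[
\int_{\p\Omega}\eta\cdot\nO\left(\int_{\R^n}\varphi(x,y)\,\dd\gamb_x(y)\right)\dd\Hau(x).
\]
Subtracting, the function $h(x)=\bar g(x)-\int\varphi(x,y)\,\dd\gamb_x(y)$, which lies in $L^\infty(\p\Omega)$, satisfies $\int_{\p\Omega}h\,(\eta\cdot\nO)\,\dd\Hau=0$ for every $\eta\in C^1(\bar\Omega)^d$.

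The remaining step, and the only mildly delicate one, is to conclude that $h=0$ $\Hau$-a.e. It suffices to show that the functions $\eta\cdot\nO$, with $\eta \in C^1(\bar\Omega)^d$, are dense in $L^1(\p\Omega)$, or at least rich enough to separate $L^\infty$ functions. This is exactly the fact already used at the end of the proof of Theorem~\ref{theo:boundary_values}: every $\psi\in C^1_c(\p\Omega)$ is the normal trace $\eta\cdot\nO$ of some $C^1$ field.

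For a Lipschitz boundary, $\nO$ is only $L^\infty$, so I would argue via a transversal field instead. Since $\Omega$ is Lipschitz, there are locally smooth vector fields $\xi$ with $\xi\cdot\nO\ge c>0$ a.e.\ on a boundary patch. Taking $\eta=\chi\,\xi$ with $\chi\in C^1_c$ arbitrary and localized to the patch gives $\int h\,\chi\,(\xi\cdot\nO)\,\dd\Hau=0$ for all such $\chi$. Hence $h\,(\xi\cdot\nO)=0$ a.e.\ on the patch, and so $h=0$ there. A finite cover of $\p\Omega$ by such patches completes the proof.
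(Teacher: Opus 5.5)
Your proposal is correct and follows the same route as the paper: you test \eqref{eq:CE_with_b} with $\eta(x)\varphi(x,y)$ and compare the result with the Gauss--Green identity that characterizes the trace of $g \in W^{1,1}(\Omega)$. The paper compresses the last step into ``the conclusion follows'', whereas you show explicitly that $\int_{\p\Omega} h\,(\eta\cdot\nO)\,\dd\Hau = 0$ for all $\eta$ forces $h=0$; your localization with a transversal field $\xi$ satisfying $\xi\cdot\nO\geq c>0$ is the right way to do this on a merely Lipschitz boundary, where an exact representation $\psi=\eta\cdot\nO$ with continuous $\eta$ need not exist.
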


\begin{proof}
	The same arguments in the proof of Lemma~\ref{lem:weak_der_int}, combined with the equation~\eqref{eq:CE_with_b}, yield: for any $\eta \in C^1_b(\bar{\Omega})$
	\begin{equation*}
		\int_\Omega g(x) \Div_x \eta(x) \dd x = - \int_\Omega \eta(x) \nabla g(x) \dd x + \int_{\p \Omega} (\eta(x) \cdot n_\Omega(x)) \left( \int_{\R^n} \varphi(x,y) \dd \gamb_x(y) \right) \dd \Hau(x).
	\end{equation*}
	The conclusion follows. 
\end{proof}

We move to useful trace estimates which will be crucial to get coercivity of our lifted functional.
We recall the classical trace estimates: there exists $C$, depending on $\Omega$, such that for all $g \in W^{1,1}(\Omega)$, using also $g$ to denote the trace of $g$ on $\p \Omega$ 
\begin{align}
	\label{eq:trace_int_to_b}
	\| g \|_{L^1(\p \Omega)} & \leq C (\| g \|_{L^1(\Omega)} + \| \nabla g \|_{L^1(\Omega)}), \\
	\label{eq:trace_b_to_int}
	\| g \|_{L^1(\Omega)} & \leq C (\| g \|_{L^1(\p \Omega)} + \| \nabla g \|_{L^1(\Omega)} ).
\end{align}
The first one is nothing else than the continuity of the trace operator from $W^{1,1}(\Omega)$ to $L^1(\p \Omega)$ \cite[Section 4.3]{EG}. The second one is also well-known, see, e.g., \cite{B_cons}.

We show that similar estimates hold for our measure-valued maps. We introduce a notation for the moments in the $y$ variable: for a measure space $(\Theta,\sigma)$ which will be $(\Omega,\dd x)$ or $(\p \Omega, \Hau)$ and a map $\gam \in L^0(\Theta, \cP(\R^n))$, we write 
\begin{equation}
	\label{eq:def_moment}
	M_{p,\Theta}(\gam) = \iint_{\Theta \times \R^n} |y|^p \dd \gam_x(y) \dd \sigma(x),
\end{equation}
where here $p > 0$ is an arbitrary exponent.

\begin{prop}
	\label{prop:trace_moments}
	Fix $0 < p \leq 2$. Then there exists a constant $C$ depending on $\Omega$ and $p$ such that, for any $\gam \in L^0(\Omega,\cP(\R^n))$ with $\Dir(\gamma) < + \infty$,
	\begin{align}
		\label{eq:trace_lift_int_to_b}
		M_{p, \p \Omega}(\gamb) & \leq C( M_{p,\Omega}(\gam) + \Dir(\gamma)^{p/2}), \\
		\label{eq:trace_lift_b_to_int} 
		M_{p, \Omega}(\gam) & \leq C( M_{p, \p \Omega}(\gamb) + \Dir(\gamma)^{p/2}).
	\end{align}
\end{prop}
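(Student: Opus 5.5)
Our plan is to reduce both inequalities to the scalar trace estimates \eqref{eq:trace_int_to_b}--\eqref{eq:trace_b_to_int}, applied to the fiberwise average of a $p$-moment. Concretely, we apply Lemma~\ref{lem:weak_der_int} and Lemma~\ref{lem:weak_der_int_b} with a test function depending only on $y$. We use the smooth, subadditive-type weight $h(y) = (1+|y|^2)^{p/2} - 1$, which is comparable to $|y|^p$ up to additive constants, and truncate it to make it bounded. Set
\[
g(x) = \int_{\R^n} \varphi(y) \, \dd \gam_x(y),
\]
where $\varphi$ is a bounded $C^1_b$ truncation of $h$ (for instance $\varphi_R = \theta_R\circ h$ with $\theta_R$ a smooth concave cutoff, $\theta_R(s)=s$ for $s\le R$, $0\le\theta_R'\le1$). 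Let $\q$ be the tangent flux and $v_x = \dd\q_x/\dd\gam_x$. Lemma~\ref{lem:weak_der_int} gives $|\nabla g(x)| \le \int |\nabla \varphi(y)|\,|v_x(y)| \, \dd\gam_x(y)$. Lemma~\ref{lem:weak_der_int_b} identifies the trace of $g$ with $\int \varphi \, \dd\gamb_x$.

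The key step is the gradient bound. Since $0\le\theta_R'\le1$, we have $|\nabla\varphi_R| \le |\nabla h| \le p\,(1+|y|^2)^{(p-1)/2}$. Applying Cauchy--Schwarz in $(x,y)$ gives
\[
\|\nabla g\|_{L^1(\Omega)} \le p \left(\iint (1+|y|^2)^{p-1} \dd\gam_x \dd x\right)^{1/2} \Dir(\gam)^{1/2}.
\]
Because $p \le 2$, the exponent satisfies $p-1 \le p/2$. Hence $(1+|y|^2)^{p-1} \le C(1+|y|^2)^{p/2}$ when $p\ge1$, and $(1+|y|^2)^{p-1} \le 1$ when $p<1$. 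In both cases the first factor is at most $C(1+M_{p,\Omega}(\gam))^{1/2}$. This is exactly where $p\le 2$ is needed, and it is the step I expect to be the crux.

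Young's inequality with exponents $(2/p,\, 2/(2-p))$ does not directly give $\Dir^{p/2}$, so we first treat the inhomogeneity by scaling. Since $\Omega$ has unit volume, $\Dir$ and the moments scale correctly under the dilation $y\mapsto \lambda y$: $\Dir$ scales by $\lambda^2$ and $M_p$ by $\lambda^p$. Instead of $h$, we therefore use $h_\varepsilon(y) = (\varepsilon^2+|y|^2)^{p/2}$. Then $|\nabla h_\varepsilon| \le p\, h_\varepsilon^{(p-1)/p}$ when $p \ge 1$, and $|\nabla h_\varepsilon| \le p\,\varepsilon^{p-1}$ when $p<1$.

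For $p\ge1$, write $G = \iint h_\varepsilon \, \dd\gam_x\dd x$. By H\"older (using $2(p-1)/p\le1$ and unit mass) we get $\|\nabla g\|_{L^1} \le p\, G^{(p-1)/p}\Dir^{1/2}$. Combined with \eqref{eq:trace_int_to_b}, this yields
\[
M^\varepsilon_{p,\p\Omega} \le C\bigl(G + G^{(p-1)/p}\Dir^{1/2}\bigr).
\]
Young's inequality with exponents $p/(p-1)$ and $p$ then gives $\le C(G + \Dir^{p/2})$, since $(\Dir^{1/2})^p = \Dir^{p/2}$. For the reverse inequality \eqref{eq:trace_lift_b_to_int}, we use \eqref{eq:trace_b_to_int} instead: $G \le C(G_b + p\,G^{(p-1)/p}\Dir^{1/2})$. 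Young's inequality with a small parameter absorbs $G$ into the left side, which requires $G<\infty$; this holds because we work with the truncation $\varphi_R$. The result is $G \le C(G_b + \Dir^{p/2})$.

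For $p<1$ we instead take $\varepsilon = \Dir^{1/2}$, so that $\|\nabla g\|_{L^1} \le p\,\varepsilon^{p-1}\Dir^{1/2} = p\,\Dir^{p/2}$ directly. In all cases the additive error $\varepsilon^p$ (or $1$) coming from $h_\varepsilon$ versus $|y|^p$ must be controlled. For $p \ge 1$ we let $\varepsilon\to0$. For $p<1$ the error is $\Dir^{p/2}$, which is harmless.

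Finally we let $R\to\infty$ by monotone convergence on both sides. The truncation is compatible with the bounds above because $\theta_R(h)\le h$ and $|\nabla(\theta_R\circ h)|\le|\nabla h|$, with $\theta_R(h)^{(p-1)/p}$ replaced by $h^{(p-1)/p}$. To keep the absorption step valid, we apply H\"older with the truncated quantity; if that is delicate, we use $\theta_R$ chosen so that $\theta_R'(s)\, s^{(p-1)/p} \le \theta_R(s)^{(p-1)/p}$, which holds for concave $\theta_R$ with $\theta_R(0)=0$. This technical verification of the truncation is the main obstacle I anticipate.
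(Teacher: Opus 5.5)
Your proposal is correct and follows essentially the same route as the paper. Both arguments use the truncated test functions $\theta_R\big((\varepsilon^2+|y|^2)^{p/2}\big)$ with a concave cutoff (the paper's $\eta_m$), the concavity bound $|\nabla\varphi|\le p\,\varphi^{(p-1)/p}$, Cauchy--Schwarz followed by Jensen and Young, absorption justified by boundedness of the truncation, and the choice $\varepsilon=\sqrt{\Dir(\gam)}$ for small $p$; placing $p=1$ in your first case also works. The only omission is the degenerate case $\Dir(\gam)=0$ for $p<1$, where $\varepsilon=\sqrt{\Dir(\gam)}=0$ is not admissible. There any $\varepsilon>0$ gives $\nabla g=0$, and one lets $\varepsilon\to0$, as the paper does.
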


\begin{proof}
	For any $m > 0$ consider a monotone concave function $\eta_m \in C^1_b([0, \infty))$ such that $\eta(t) = t$ for $t \in [0, m]$ and $\eta(t) \equiv 2 m$ for any $t > 3 m$.
	For $\epsilon > 0$, define the function $\varphi_{\epsilon,m}(y) = \eta_m\left((|y|^2 + \epsilon^2)^{p/2}\right)$. Clearly, $\varphi_{\epsilon,m} \in C^1_b(\R^n)$. Now, define 
	\begin{equation*}
		g_{\epsilon,m} : x \mapsto \int_{\R^n} \varphi_{\epsilon,m}(y) \, \dd \gam_x(y).
	\end{equation*}
	as in Lemma~\ref{lem:weak_der_int}. By Lemma~\ref{lem:weak_der_int} and Lemma~\ref{lem:weak_der_int_b}, $g_{\epsilon,m} \in (W^{1,1}\cap L^\infty)(\Omega)$, with trace given by $g_{\epsilon,m}(x)= \int_{\R^n} \varphi(y) \, \dd \gamb_x(y)$ for $x \in \p \Omega$, and with gradient 
	\begin{equation}
		\label{eq:g_gradient_only_on_y_a}
		\nabla g_{\epsilon,m}(x) = \int_{\R^n} \nabla \varphi_{\epsilon,m}(y) \dd \q_x(y). 
	\end{equation}
	
	\emph{1st case: $1 < p \leq 2$.}
	Since $0 \leq \eta_m'(t) \le \frac{\eta_m(t)}{t} \leq 1$ by concavity, and $0 < \frac{p-1}{p} \le 1$, we obtain
	\begin{align*}
		|\nabla \varphi_{\epsilon,m}(y)| &= p |y| (|y|^2 + \epsilon^2)^{p/2 - 1} \eta_m'\left((|y|^2 + \epsilon^2)^{p/2}\right) \\
		&\leq p (|y|^2 + \epsilon^2)^{(p-1)/2} \left(\frac{\varphi_{\epsilon,m}(y)}{(|y|^2 + \epsilon^2)^{p/2}}\right)^{(p-1)/p} \\
		&= p \varphi_{\epsilon,m}(y)^{(p-1)/p} .
	\end{align*}
	Starting from~\eqref{eq:g_gradient_only_on_y_a}, we apply the Cauchy--Schwarz, Young's, and the Minkowski inequalities to get for any $a > 0$
	\begin{align*}
		\int_\Omega |\nabla g_{\epsilon,m}(x)| \, \dd x &\leq \sqrt{\Dir(\gam)} \sqrt{ \iint_{\QQ} |\nabla \varphi_{\epsilon,m}(y)|^2 \, \dd \gam_x(y) \dd x } \\
		&\leq p \sqrt{\Dir(\gam)} \sqrt{ \iint_{\QQ} \varphi_{\epsilon,m}(y)^{\frac{2 (p-1)}{p}} \, \dd \gam_x(y) \dd x } \\
		&\leq a^{p-1} \Dir(\gam)^{p/2} + \frac{p-1}{a} \left(\iint_{\QQ} \varphi_{\epsilon,m}(y)^{\frac{2 (p-1)}{p}} \, \dd \gam_x(y) \dd x \right)^{\frac{p}{2 (p-1)}} \\
		&\leq a^{p-1} \Dir(\gam)^{p/2} + \frac{p-1}{a} \iint_{\QQ} \varphi_{\epsilon,m}(y) \, \dd \gam_x(y) \dd x .
	\end{align*}
	Combining with the classical estimate~\eqref{eq:trace_int_to_b}, and remembering that $\frac p 2 \le 1$, we obtain
	\begin{equation*}
		\iint_{\p \Omega \times \R^n} \varphi_{\epsilon,m}(y) \, \dd \gamb_x(y) \dd \Hau (x) \leq C \left(\Dir(\gam)^{p/2} + \iint_{\QQ} |y|^p \, \dd \gam_x(y) \dd x + \epsilon^p\right).
	\end{equation*}
	Letting $\epsilon \to 0$ and $m \to \infty$, Fatou's lemma yields~\eqref{eq:trace_lift_int_to_b}.
	On the other hand, to obtain~\eqref{eq:trace_lift_b_to_int}, we start from the classical estimate~\eqref{eq:trace_b_to_int} for the function $g_{\epsilon,m}$ and obtain
	\begin{align*}
		&\iint_{\QQ} \varphi_{\epsilon,m}(y) \, \dd \gam_x(y) \dd x \leq C \left(\int_\Omega |\nabla g_{\epsilon,m}(x)| \, \dd x + \iint_{\p \Omega} g_{\epsilon,m}(y) \, \dd \Hau (x)\right)\\
		&\qquad \leq C \left(a^{p-1} \Dir(\gam)^{p/2} + \frac{1}{a} \iint_{\QQ} \varphi_{\epsilon,m}(y) \, \dd \gam_x(y) \dd x + \iint_{\p \Omega \times \R^n} \varphi_{\epsilon,m}(y) \, \dd \gamb_x(y) \dd \Hau (x) \right) .
	\end{align*}
	For $a$ large enough, the term $\iint_{\QQ} \varphi_{\epsilon,m}(y) \, \dd \gam_x(y) \dd x = \int g_{\epsilon,m}(x) \dd x$ in the right-hand side is absorbed by the left-hand side. Again, letting $\epsilon \to 0$, $m \to \infty$, one gets~\eqref{eq:trace_lift_b_to_int} by Fatou's lemma.
	
	\emph{2nd case: $0 < p \leq 1$.} In this case, elementary algebra shows that
	\begin{equation*}
		|\nabla \varphi_{\epsilon,m}(y)| \leq p |y| (|y|^2 + \epsilon^2)^{p/2 - 1} \leq p \epsilon^{p - 1} .
	\end{equation*}
	First, consider the case $\Dir(\gamma) > 0$ and fix $\epsilon = \sqrt{\Dir(\gam)}$. Using the Cauchy--Schwarz inequality again, we get
	\begin{equation*}
		\int_\Omega |\nabla g_{\epsilon,m}(x)| \, \dd x \leq p \epsilon^{p - 1} \sqrt{\Dir(\gam)} = p \Dir(\gam)^{p/2} .
	\end{equation*}
	The rest follows directly from~\eqref{eq:trace_int_to_b} and~\eqref{eq:trace_b_to_int} using Fatou's lemma with $m \to \infty$ while keeping $\epsilon$ fixed.
	
	Finally, if $\Dir(\gamma) = 0$, then $\int_\Omega |\nabla g_{\epsilon,m}(x)| = 0$ and we conclude by taking $\epsilon \to 0$, $m \to \infty$.
	
\end{proof}

\begin{rmk}
	Take $\sigma \in \cP(\R^n)$ a fixed probability measure, and consider a constant measure-valued map $\gam : x \mapsto \sigma$. Then $\q \equiv 0$ is always such that $(\gam,\q)$ satisfies~\eqref{eq:CE}, meaning $\Dir(\gam) = 0$ and also $\gamb_x = \sigma$ for a.e.\ $x \in \p \Omega$. This holds without any moment condition on $\sigma$. In this case, assuming for simplicity that $\Omega$ and $\p \Omega$ have both unit measure, we have $M_{p,\Omega}(\gam) = M_{p,\p\Omega}(\gamb)$ for any $p > 0$, where both terms are potentially infinite.
	We deduce in particular that the exponents in the left hand side of~\eqref{eq:trace_int_to_b} and~\eqref{eq:trace_lift_b_to_int} cannot be improved, meaning we cannot find an estimate of the type $M_{q, \p \Omega}(\gamb)^{p/q} \leq C( M_{p,\Omega}(\gam) + \Dir(\gamma)^{p/2})$ for $q > p$. Said differently, there is no gain in integrability from a control of the Dirichlet energy, contrary to what happens for classical Sobolev functions.
\end{rmk}


\section{The measure-valued Neumann problem} \label{s:nneum}

Before stating our problem, in order to ensure that the integrals are well-defined and to define coercivity we need some vocabulary. In the following we consider $(\Theta, \sigma)$ a finite measure space, that will be $(\Omega, \dd x)$ or $(\p \Omega, \Hau)$.

\begin{defi}
	\label{defi:p_bounded_coercive}
	Given $p\ge 0$, a function $h : \Theta \times \R^n \to \R$ is said to:
	\begin{enumerate}
		\item be $p$-bounded from below if there exists $a \in L^1(\Theta,\sigma)$ and $C < +  \infty$ such that, for all $(\theta,y) \in \Theta \times \R^n$, 
		\begin{equation*}
			h(\theta,y) \geq a(x) - C |y|^p; 
		\end{equation*}
		\item be $p$-coercive if there exists $a \in L^1(\Theta,\sigma)$ and $c > 0$ such that, for all $(\theta,y) \in \Theta \times \R^n$, 
		\begin{equation*}
			h(\theta,y) \geq a(x) + c |y|^p.
		\end{equation*}
		\item have at most $p$-growth if there exists $a \in L^1(\Theta,\sigma)$ and $C < +  \infty$ such that, for all $(\theta,y) \in \Theta \times \R^n$, 
		\begin{equation*}
			|h(\theta,y)| \leq a(x) + C |y|^p; 
		\end{equation*}
	\end{enumerate}
\end{defi} 

\noindent Note that being $0$-bounded from below simply means, with our vocabulary, being bounded from below by an integrable function.

\subsection{The classical problem and its lifted version} \label{clas-lif}

Take $\Rhs$, $\phib$ two measurable functions from respectively $\Omega \times \R^n$ and $\p \Omega \times \R^n$ to $\R$. We first consider the following classical problem of calculus of variations:
\begin{equation}\label{e:mainnotlifted}  
	\min_{u\in H^1(\Omega,\R^n)} \EEu(u),
\end{equation} 
where
\begin{equation}\label{e:mainnotliftedenergy}  
	\EEu(u) \coloneqq \frac 1 2\int_\Omega |\nabla u(x)|^2\,\dd x+\int_\Omega \Rhs(x,u(x))\,\dd x+ \int_{\p \Omega} \phib(x,u(x))\,\dd \Hau(x).
\end{equation}
To be precise, we restrict to competitors $u\in H^1(\Omega,\R^n)$, and such that the negative parts $\Rhs(x,u)_-$ and $\phib(x,u)_-$ are integrable (if not, we set $\EEu(u)=+ \infty$ by convention). Thus, $\EEu(u)$ is well-defined as an element of $(- \infty, + \infty]$.


Under suitable growth and coercivity conditions, this problem has a solution and the Euler-Lagrange equations read as a semilinear elliptic system. Specifically, we call the partial derivatives
$$
-\nabla_y\Rhs(x,y) \eqqcolon \rhs(x,y), \qquad -\nabla_y\phib(x,y)=:\g(x,y).
$$
With these notations, the Euler–Lagrange equations are given by the semilinear elliptic system \eqref{eq:EL_cl}, which we recall here:
\begin{equation}
	\label{eq:EL_classical}
	\begin{cases}
		-\Delta u(x)=f(x,u(x)) & \text{in } \Omega, \\
		\displaystyle{\frac {\p u }{\p \nO}(x)=\g(x,u(x))} & \text{on } \p \Omega.
	\end{cases}
\end{equation}
The case $\Rhs$ independent of $y$ leads to $f \equiv 0$, thus to a harmonic solution $u$ inside of $\Omega$. If $\phib$ does not depend on $y$, then we have homogeneous Neumann boundary conditions. When $\phib$ depends on $y$, we obtain (generalized) Robin boundary conditions, the prototypical example being $\phib(x,y) = |y|^2/2$.

We propose to lift~\eqref{e:mainnotlifted} to measure-valued maps: resonating with the previous works \cite{HL19,HL23} we consider
\begin{equation}\label{e:mainlifted}
	\min_{\gam\in L^0(\Omega,  \cP(\R^n))} \EE(\gam),
\end{equation} 
where
\begin{equation}\label{e:mainliftedenergy}\EE(\gam) \coloneqq \frac 1 2 \Dir(\gam) +\iint_\QQ \Rhs(x,y) \,\dd\gam_x(y)\,\dd x+ \iint_{\p \QQ} \phib(x,y)\,\dd\gamb_x(y) \,\dd\Hau(x).
\end{equation}
This lift satisfies the \emph{lifting identity} in the sense of \cite{HL23}: given a classical map $u \in H^1(\Omega,\R^n)$ and its Dirac lift $\gam_u$, defined by
\[
(\gam_u)_x = \delta_{u(x)}, \qquad x \in \Omega,
\]
we have
\begin{equation}
	\label{eq:lifting_identity}
	\EE(\gam_u) = \EEu(u),
\end{equation}
see in particular \cite[Corollary 4.9]{HL23}.
Similarly to the deterministic case, we adopt the following convention for the definition: $\EE(\gam) = + \infty$ as soon as $\Dir(\gam) = + \infty$. Moreover, if $\Dir(\gam) < + \infty$, we only allow competitors such that $\Rhs_-$ and $(\phib)_-$ (the negative parts) are integrable with respect to respectively $\dd \gam_x \dd x$ and $\dd \gam_x \dd \Hau(x)$ (if $\gam$ does not satisfy the latter condition, we set $\EE(\gam) = + \infty$). With this convention, $\EE(\gam)$ is well-defined as an element of $(- \infty, + \infty]$.


\subsection{Existence} \label{s:exis}

We now state the existence of a solution to the lifted problem \eqref{e:mainlifted}. For this we apply the direct method of calculus of variations. The delicate part is to prove coercivity of $\EE$.
The intuition is that it comes from the Dirichlet energy together with either $\Rhs$ or $\phib$, while the other term, not being coercive, may even be unbounded from below, as long as it is controlled by the coercive term.

Lower-semi continuity on the other hand is much more standard to obtain: it relies on the notion of Carathéodory functions, see Appendix~\ref{sec:appendix_caratheodory} for the definitions and some standard results. 

\begin{theo}
	\label{theo:existence_lifted}
	Let $\Rhs$ and $\phib$ be a pair of Carath\'eodory functions. Assume that for some $0 \leq p < q \leq 2$, one of the following holds:
	\begin{enumerate}
		\item $\Rhs$ is $q$-coercive and $\phib$ is $p$-bounded from below;
		\item $\phib$ is $q$-coercive and $\Rhs$ is $p$-bounded from below.
	\end{enumerate}
	Assume also that there exists a function $u\in H^1(\Omega,\R^n)$ with finite energy $\EEu(u)$. Then the lifted problem \eqref{e:mainlifted} admits a minimizer. 
\end{theo}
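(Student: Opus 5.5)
We use the direct method on a minimizing sequence $(\gam_m)_m$ for~\eqref{e:mainlifted}. By the lifting identity~\eqref{eq:lifting_identity}, the Dirac lift $\gam_u$ of the given $u$ has $\EE(\gam_u)=\EEu(u)<+\infty$. Hence $\inf \EE<+\infty$ and we may assume $\EE(\gam_m)\le \EEu(u)+1$. The main step is an a priori bound
\begin{equation*}
\sup_m \Bigl( \Dir(\gam_m) + M_{q,\Omega}(\gam_m) + M_{q,\p\Omega}(\gam_m^b) \Bigr) < +\infty,
\end{equation*}
which also shows that $\inf\EE>-\infty$.

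\emph{Coercivity (main obstacle).} Consider case 1; case 2 is symmetric with the roles of~\eqref{eq:trace_lift_int_to_b} and~\eqref{eq:trace_lift_b_to_int} exchanged. Write $D=\Dir(\gam_m)$, $M=M_{q,\Omega}(\gam_m)$, $M^b=M_{p,\p\Omega}(\gam_m^b)$. The coercivity of $\Rhs$ and the lower bound on $\phib$ give
\begin{equation*}
\EE(\gam_m)\ \ge\ \tfrac12 D + cM - C M^b - C_0 .
\end{equation*}
We bound $M^b$ by Proposition~\ref{prop:trace_moments} with exponent $p$, which gives $M^b\le C(M_{p,\Omega}(\gam_m)+D^{p/2})$. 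Since $\Omega$ has unit volume, Jensen (or $|y|^p\le 1+|y|^q$) gives $M_{p,\Omega}\le M^{p/q}$. Because $p<q$ and $p\le 2$ with $p<2$, Young's inequality yields
\begin{equation*}
C M^{p/q}\le \tfrac c2 M + C', \qquad C D^{p/2}\le \tfrac14 D + C'.
\end{equation*}
The case $p=0$ is trivial. Therefore $\tfrac14 D+\tfrac c2 M\le \EE(\gam_m)+C''$, which bounds $D$ and $M$. Applying~\eqref{eq:trace_lift_int_to_b} again with exponent $q$ then bounds $M_{q,\p\Omega}(\gam_m^b)$. In case 2, $\EE\ge \tfrac12 D + cM^b_q - CM_{p,\Omega}$. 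We bound $M_{p,\Omega}$ via~\eqref{eq:trace_lift_b_to_int} by $C(M^b_p + D^{p/2})$ and absorb in the same way, using that $\Hau(\p\Omega)<\infty$. The delicate point is handling $q=2$, $p<2$ and the degenerate case $p=0$, but these only affect the Young exponents. The bounds also make the integrals of the negative parts uniformly controlled, since they are dominated by $a+C|y|^p$.

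\emph{Compactness and lower semicontinuity.} The function $g(x,y)=|y|^q$ has compact sublevel sets on $\bar\Omega\times\R^n$ when $q>0$. If $q=0$, then $p<q$ is impossible, so $q>0$. Thus $\dd(\gam_m)_x\dd x$ is tight by the criterion of Section~\ref{s:remin}, and up to a subsequence it converges narrowly to some $\pi$. The first marginal of $\pi$ is still Lebesgue, so by disintegration $\pi=\dd\gam_x\dd x$ and $\gam_m\to\gam$ weakly. Theorem~\ref{t:dir} gives $\Dir(\gam)\le\liminf\Dir(\gam_m)$. Since the Dirichlet energies are bounded, Theorem~\ref{theo:boundary_values} gives $(\gam_m)^b_x\dd\Hau\to\gamb_x\dd\Hau$ weakly.

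For the potential terms, we apply the Carathéodory lower semicontinuity results of Appendix~\ref{sec:appendix_caratheodory} to $\Rhs$ and $\phib$. These results require a lower bound of the form $a(x)-C|y|^p$ together with uniform integrability of $|y|^p$ along the sequence. The latter follows from the uniform $q$-moment bounds with $q>p$, on both $\Omega$ and $\p\Omega$. Consequently
\begin{equation*}
\iint \Rhs\,\dd\gam_x\dd x\le\liminf_m \iint \Rhs\,\dd(\gam_m)_x\dd x,
\end{equation*}
and similarly for $\phib$. This is precisely where the strict inequality $p<q$ is needed: the moment bounds are established in both norms. Summing up gives $\EE(\gam)\le\liminf\EE(\gam_m)=\inf\EE$. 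The negative parts are integrable by the moment bounds, so $\gam$ is an admissible minimizer.
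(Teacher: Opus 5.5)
Your proposal is correct and follows essentially the same route as the paper. The a priori bounds come from the moment trace estimates \eqref{eq:trace_lift_int_to_b}--\eqref{eq:trace_lift_b_to_int} with absorption (you spell out the Jensen/Young steps and the degenerate case $p=0$, which the paper leaves implicit), followed by tightness, lower semicontinuity of $\Dir$, trace stability from Theorem~\ref{theo:boundary_values}, and Corollaries~\ref{c:b1}--\ref{c:b2}; your extra check that the limit has integrable negative parts is a harmless refinement.
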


\begin{proof}
	As it only shifts the value of the energy $\EE$ by a finite constant, we can always add an integrable function of $x$ to $\Rhs$ and $\phib$, and thus we always assume that the function $a$ for the assumption of being $p$-bounded below or $p$-coercive is identically zero.
	
	We use the direct method of calculus of variations. The problem is not empty, since $\EEu(u) < + \infty$ for at least one $u$, and the lifting identity~\eqref{eq:lifting_identity} holds.
	Let $\{\gam_k\}$ be a minimizing sequence, and let $\dd \tplan_k \coloneqq \dd (\gam_k)_x \dd x$ as a measure on $\bar{\Omega} \times \R^n$. 
	
	\emph{1st case: if $\Rhs$ is $q$-coercive and $\phib$ is $p$-bounded from below}.
	Using~\eqref{eq:trace_lift_int_to_b}, we obtain
	\begin{equation*}
		\EE(\gam) \geq \frac{1}{2} \Dir(\gam) + c \iint_\QQ |y|^q \, \dd \gam_x(y) \dd x - C \iint_\QQ |y|^p \, \dd \gam_x(y) \dd x - C \Dir(\gam)^{p/2}. 
	\end{equation*}
	As $q > p$ and $p/2 < 1$, from $\sup_k \EE(\gam_k) < + \infty$ we see that the minimizing sequence is such that 
	\begin{equation}
		\label{eq:a_priori_F_coercive}
		\sup_k \Dir(\gam_k) < + \infty, \; \qquad \sup_k \iint_\QQ |y|^q \, \dd \tplan_k < + \infty. 
	\end{equation}
	Furthermore, using again~\eqref{eq:trace_lift_int_to_b}, we also have 
	\begin{equation}
		\label{eq:a_priori_F_coercive_boundary}
		\sup_k \iint_{\p \Omega \times \R^n} |y|^{q} \dd (\gamma_k)^b_x(y) \dd \Hau(x) < + \infty.
	\end{equation}
	The second bound in~\eqref{eq:a_priori_F_coercive} is enough to guarantee tightness of $\{ \pi_k \}$, and thus relative compactness in the topology of narrow convergence \cite[Remark 5.1.5]{AmbrosioGigliSavare08}.  
	Thus, up to a subsequence, $\tplan_k$ converges weakly to some $\tplan\in \cP(\QQ)$. Its first marginal is clearly the Lebesgue measure, which implies that there is $\gam\in L^0(\Omega, \cP(\R^n))$ such that $\dd \tplan=\dd \gam_x \dd x$. By Theorem~\ref{t:dir}, $\Dir(\gam)$ is finite and does not exceed $\liminf_{k\to +\infty}\Dir(\gam_k)$. By Theorem~\ref{theo:boundary_values}, $(\gamma_k)^b_x \dd \Hau(x)$ converges weakly to $\gamb_x \dd \Hau(x)$ as $k \to + \infty$. By Corollary~\ref{c:b1} (recall that $F$ is coercive thus bounded from below) and Corollary~\ref{c:b2} (using the bound~\eqref{eq:a_priori_F_coercive_boundary} as $q > p$), 
	\begin{multline*}
		\iint_\QQ \Rhs(x,y) \,\dd\gam_x(y)\,\dd x+ \iint_{\p \QQ} \phib(x,y)\,\dd\gamb_x(y) \,d\Hau(x)\\ \le \liminf_{k\to +\infty}\iint_\QQ \Rhs(x,y) \,\dd(\gam_k)_x(y)\,\dd x+ \iint_{\p \QQ} \phib(x,y)\,\dd(\gamb_k)_x(y) \,\dd\Hau(x). 
	\end{multline*} 
	Summing we get $\EE(\gam) \leq \lim_k \EE(\gam_k)$, hence $\gamma$ is a minimizer of \eqref{e:mainlifted}.
	
	\emph{2nd case: if $\phib$ is $q$-coercive and $\Rhs$ is $p$-bounded from below}.
	We only briefly comment on how to adapt the arguments in this second case. This time we use~\eqref{eq:trace_lift_b_to_int} to have the a priori estimate
	\begin{multline*}
		\EE(\gam) \geq 
		\frac{1}{2} \Dir(\gam) + c \iint_{\p \Omega \times \R^n} |y|^q \dd \gamb_x(y) \dd \Hau(x) \\
		- C \iint_{\p \Omega \times \R^n} |y|^p \dd \gamb_x(y) \dd \Hau(x) - C \Dir(\gam)^{p/2}. 
	\end{multline*}
	Using that $\EE(\gam_k)$ is bounded from above uniformly in $k$, again with~\eqref{eq:trace_lift_b_to_int}, we conclude
	\begin{equation*}
		\sup_k \left\{ \Dir(\gam_k)  + \iint_\QQ |y|^q \, \dd \tplan_k + \iint_{\p \Omega \times \R^n} |y|^{q} \dd (\gamma_k)^b_x(y) \dd \Hau(x) \right\} < + \infty. 
	\end{equation*}
	Once we have these estimates, the rest is almost identical: the sequence $\{\tplan_k\}$ is tight, converges to $\tplan$ such that $\dd \tplan=\dd \gam_x \dd x$, and with Corollary \ref{c:b1} (for the boundary term) and Corollary \ref{c:b2} (for the bulk term), we have $\EE(\gam) \leq \lim_k \EE(\gam_k)$. Thus we conclude that $\gamma$ is a minimizer of \eqref{e:mainlifted}.
\end{proof}

\begin{rmk}
	Under the same assumptions as in Theorem~\ref{theo:existence_lifted}, the direct method of calculus of variations also yields the existence of a solution to the classical problem~\eqref{e:mainnotlifted}.
\end{rmk}

\subsection{Optimality conditions} \label{sec-opt}

Now that existence is guaranteed under standard growth conditions, we turn to the derivation of optimality conditions and explain how they can be interpreted as reasonable measure-valued generalizations of the system \eqref{eq:EL_classical}.
We write $\alpha$ for an index in $1, \ldots, d$, corresponding to the coordinates in $\Omega$. On the other hand, we use the Roman letters $i,j$ to index the coordinates of $\R^n$. We drop the dependence of $\gam$ and $Q$ on $x$ for readability. In particular $Q = (Q^{\alpha i})_{\alpha,i}$ as a collection of scalar-valued measures. 
We also recall the notation $M_{p,\Theta}$ for the $p$-th moment in~\eqref{eq:def_moment}.   

\begin{theo}
	\label{theo:optimality_conditions}
	Assume $\gamma$ is a minimizer in~\eqref{e:mainlifted} and write $Q$ for the tangent flux. Assume $\rhs$ and $\g$ have respectively at most $p$- and $q$-growth in the sense of Definition~\ref{defi:p_bounded_coercive}, $0\le p,q$, and that 
	\begin{equation}
		\label{eq:boundedness_moments}
		M_{p,\Omega}(\gam) < + \infty; \quad M_{q,\p\Omega}(\gamb) < + \infty.
	\end{equation}
	Then, in the sense of distributions, 
	\begin{align}
		\label{eq:optimality_int_acc}
		\sum_{\alpha = 1}^d \partial_{x_\alpha} Q^{\alpha i} + \sum_{\alpha = 1}^d \sum_{j=1}^n \partial_{y_j} \left( \frac{\dd Q^{\alpha i}}{\dd \gam} \frac{\dd Q^{\alpha j}}{\dd \gam} \gam \right)  & +f^i \gam=0, & \forall i =1, \ldots, n \quad \text{in } \Omega \times \R^n.
	\end{align}
	Moreover, the map $x \mapsto Q_x$, seen as a map valued in the dual of $C^1_c(\R^n)^{nd}$, has a divergence which is in $L^1$, and the normal part of the trace is given by 
	\begin{align}
		\label{eq:optimality_boundary}
		\sum_{\alpha=1}^d Q^{\alpha i} \nO^\alpha & = f_b^i \gam^b & \forall i=1, \ldots, n \quad \text{on } \partial \Omega \times \R^n.
	\end{align}
\end{theo}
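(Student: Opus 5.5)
We derive the optimality conditions by perturbing the minimizer $\gam$ along flows in the target space, that is, by taking variations of the form $\gam^t_x = (\mathrm{Id} + t\,\xi(x,\cdot))_\# \gam_x$ for a vector field $\xi \in C^1_c(\bar\Omega \times \R^n)^n$. Note that $\xi$ is allowed to be nonzero up to the boundary, so that the boundary condition~\eqref{eq:optimality_boundary} also comes out. This is the natural Wasserstein analogue of the variation $u + t\xi(x,u)$ in the classical case.

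\emph{Step 1: admissible competitors and their flux.} Set $T_t(x,y) = y + t\xi(x,y)$. For $|t|$ small, $T_t(x,\cdot)$ is a $C^1$ diffeomorphism of $\R^n$ that equals the identity outside a compact set. We build a flux for $\gam^t$ by pushing forward the identity $\nabla_x \gam + \Div_y \q = 0$. A test function $\varphi(x,T_t(x,y))$ is still in $C^1_b$, and Lemma~\ref{lem:weak_der_int} together with the chain rule yields a candidate flux
\[
\q^t_x = (T_t)_\#\Big( (\nabla_y T_t)\,\q_x + t\,\nabla_x\xi\,\gam_x \Big),
\]
with the transposition fixed according to the paper's convention. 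This pair satisfies~\eqref{eq:CE}, and by Theorem~\ref{theo:boundary_values} the traces transform in the same way: $(\gam^t)^b_x = T_t(x,\cdot)_\#\gamb_x$. Writing $v = \dd\q/\dd\gam$, this gives
\[
\Dir(\gam^t) \le \iint \big\| (I + t\nabla_y\xi)\, v + t\,\nabla_x \xi \big\|^2 \,\dd\gam_x\,\dd x .
\]
Equality holds at $t=0$, and the right-hand side is a polynomial in $t$.

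\emph{Step 2: first variation.} Since $\gam$ is a minimizer, $t\mapsto \EE(\gam^t)$ is bounded above by a function $\Phi(t)$ that is differentiable at $0$ and satisfies $\Phi(0)=\EE(\gam)$. Hence $\Phi'(0)=0$. To differentiate the potential terms we write
\[
\iint \Rhs(x, y+t\xi)\,\dd\gam_x\,\dd x - \iint \Rhs\,\dd\gam_x\,\dd x = \int_0^t \iint \nabla_y \Rhs(x,y+s\xi)\cdot \xi \,\dd\gam_x\,\dd x\,\dd s .
\]
Because $\xi$ has compact support in $y$, the point $y + s\xi$ stays in a bounded set wherever $\xi \neq 0$. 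The growth bound on $\rhs$ together with the moment condition~\eqref{eq:boundedness_moments} (or simply boundedness on compacts in $y$, plus $a\in L^1$) gives dominated convergence. I expect this step to be the main technical obstacle: one has to check that the integrals are finite for $t\neq0$ under only the Carathéodory assumption on $\Rhs$, and the growth of $\rhs$ is what controls this. The boundary term is handled in the same way with $\g$ and $M_{q,\p\Omega}$. Collecting terms, we obtain for all such $\xi$
\[
\iint \Big( v : \nabla_x\xi + (v\,v^\top)\!:\!\nabla_y \xi \Big)\dd\gam_x\dd x - \iint \rhs\cdot\xi\,\dd\gam_x\dd x - \int_{\p\Omega}\!\int \g\cdot\xi\,\dd\gamb_x\,\dd\Hau = 0,
\]
where $(vv^\top)$ denotes the tensor $\sum_\alpha v^{\alpha i}v^{\alpha j}$.

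\emph{Step 3: reading off the equations.} Taking $\xi\in C^1_c(\QQ)^n$ gives~\eqref{eq:optimality_int_acc} in the sense of distributions, since $v^{\alpha i}\gam = Q^{\alpha i}$. For the boundary statement, we consider the vector-valued map $x\mapsto Q_x$ tested against $\psi\in C^1_c(\R^n)$. Taking $\xi(x,y) = \eta(x)\psi(y)$ with $\eta\in C^1_c(\Omega)$ shows that its distributional $x$-divergence is
\[
-\int \nabla\psi : (vv^\top)\,\dd\gam_x - \int \rhs\,\psi\,\dd\gam_x ,
\]
which lies in $L^1(\Omega)$ by Cauchy--Schwarz (using $\Dir(\gam)<\infty$) and the growth of $\rhs$ combined with~\eqref{eq:boundedness_moments}. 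Then, allowing $\eta\in C^1(\bar\Omega)$, the leftover boundary term identifies the normal trace $\sum_\alpha Q^{\alpha i}\nO^\alpha$ with $\g^i\gamb$, which is~\eqref{eq:optimality_boundary}. This uses the divergence-measure trace theory for $L^1$ fields with $L^1$ divergence.
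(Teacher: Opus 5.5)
Your proposal is correct and follows essentially the same route as the paper. The push-forward by $y\mapsto y+t\xi(x,y)$, the transported flux $(T_t)_\#\big((\nabla_yT_t)\q_x+t\nabla_x\xi\,\gam_x\big)$, the first-order expansion, and the reading-off via $\xi=\eta(x)\psi(y)$ are exactly the paper's steps. The only real difference is that you take $\xi$ compactly supported in $y$, so local bounds from the growth conditions suffice without the moment hypotheses \eqref{eq:boundedness_moments}; the paper instead uses $\psi\in C^1_b$, which is why it needs those moments and obtains the stronger weak form \eqref{eq:optimality_weak_form}.

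One small sign slip: the $x$-divergence of $x\mapsto\int\psi\cdot\dd Q_x$ is $+\int\nabla\psi:(vv^\top)\,\dd\gam_x-\int\rhs\cdot\psi\,\dd\gam_x$, with a plus sign on the first term.
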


We also refer the reader to~\eqref{eq:optimality_weak_form} for the weak form of these optimality conditions, which may be easier to process, as well as to Corollary~\ref{cor:optimality_conditions_h} below. 
Note that $\frac{\dd Q}{\dd \gam}$ is a function in $L^2_\gam(\Omega \times \R^n)^{n \times d}$, so that $\frac{\dd Q^{\alpha i}}{\dd \gam} \frac{\dd Q^{\alpha j}}{\dd \gam} \gam$ is a signed measure, whose derivative can be defined in the sense of distributions.

\begin{rmk}
	Write $v$ for the density $\frac{\dd \q}{\dd \gam}$.
	If $\gam$ and $v$ were smooth functions, using the chain rule in~\eqref{eq:optimality_int_acc}, and simplifying with the help of the continuity equation~\eqref{eq:CE}, we see that 
	\begin{align*}
		\sum_{\alpha = 1}^d \partial_{x_\alpha} v^{\alpha i}  + \sum_{\alpha = 1}^d \sum_{j=1}^n v^{\alpha i} \partial_{y_j}  v^{\alpha j}   & + f^i=0,  & \forall i=1, \ldots, n \quad \text{on } \mathrm{supp}(\gam),
	\end{align*}
	which was already derived informally in \cite{B01} in the purely harmonic case $f\equiv 0$, see also \cite{HL19}. Note that, as $Q$ is the tangent flux, $v$ is formally the gradient of a function in the $y$ variable, see Remark~\ref{rem:tangent_gradient}. On the other hand, for~\eqref{eq:optimality_boundary}, if $\gam$, $\gam^b$ and $v$ are continuous functions up to the boundary, then the boundary conditions read
	\begin{align*}
		\sum_{\alpha=1}^d v^{\alpha i} \nO^\alpha & = f_b^i  & \forall i=1, \ldots, n \quad \text{on } \mathrm{supp}(\gam^b).  
	\end{align*}
\end{rmk}

The proof method is to use ``interior'' perturbations, meaning perturbation of the source domain. This is a classical approach when studying harmonic maps \cite[Chapter 8]{Jost17}, and it has also been used by Brenier to study regularity of curves valued in the space of probability measures \cite{brenier1997homogenized}, with notable applications to the study of generalized Euler equations \cite{brenier1999minimal,ambrosiofigalli2009geodesics}. The main difference is that we need to keep track of more than one ``temporal'' dimension, but this is mainly about careful bookkeeping. 

\begin{proof}
	In this proof we use Einstein's convention of summation over repeated indices. We recall that $\alpha$ indexes the coordinates of $x \in \Omega$, while $i,j$ index the coordinates of $y \in \R^n$. 
	
	\emph{1st step: construction of a competitor}. 
	Fix $\psi \in C^1_b(\bar{\Omega} \times \R^n)^n$. For $\epsilon \in \R$ we define 
	\begin{equation*}
		\Phi_\epsilon(x,y) = y + \epsilon \psi(x,y).
	\end{equation*}
	For $|\epsilon|$ small enough, $\Phi_\epsilon(x,\cdot)$ is a bijection from $\R^n$ onto $\R^n$ for all $x \in \Omega$. We define 
	\begin{equation}
		\label{eq:gam_eps_Q_eps}
		\gam^\epsilon_x = \Phi_\epsilon(x,\cdot)_\# \gam_x, \quad \gamma^{b,\epsilon}_x =  \Phi_\epsilon(x,\cdot)_\# \gamb_x, \quad Q^{\epsilon,\alpha i}_x = \Phi_\epsilon(x,\cdot)_\# \left( (\partial_{x_\alpha} \Phi^i_\epsilon)  \gam + (\partial_{y_j} \Phi^i_\epsilon) \q^{\alpha j}_x \right). 
	\end{equation}
	We claim that $(\gam^\epsilon, Q^\epsilon)$ satisfies the continuity equation with boundary conditions $\gamma^{b,\epsilon}$. 
	Indeed, take $\varphi \in C^1_b(\bar{\Omega} \times \R^n)^d$ and define $\varphi_\epsilon(x,y) = \varphi(x, \Phi_\epsilon(x,y))$. As $(\gam,Q)$ satisfies the continuity equation, 
	\begin{equation*}
		\iint_\QQ \Div_x \varphi_\epsilon \, \dd \gam_x \dd x + \iint_\QQ \nabla_y  \varphi_\epsilon : \dd \q_x \, \dd x = \iint_{\p \Omega \times \R^n} \varphi_\epsilon \cdot \nO \, \dd \gamb_x \dd \Hau(x).
	\end{equation*}
	We can compute explicitly:
	\begin{align*}
		\Div_x \varphi_\epsilon(x,y)  &= \Div_x \varphi(x,\Phi_\epsilon(x,y)) + \partial_{x_\alpha} \Phi_\epsilon^i(x,y) \partial_{y_i} \varphi^\alpha(x,\Phi_\epsilon(x,y)), \\
		\partial_{y_i} \varphi_\epsilon^\alpha(x,y) &=  \partial_{y_i} \Phi^j_\epsilon(x,y) \partial_{y_j} \varphi^\alpha(x,\Phi_\epsilon(x,y)).
	\end{align*}
	Note that $\partial_{y_i}$ in the right-hand sides of these equalities denotes the partial derivative with respect to the $i$-th component of the “second” argument of $\varphi^\alpha$, rather than with respect to the actual $y_i$.
	Consequently, we obtain: 
	\begin{multline*}
		\iint_\QQ  \Div_x \varphi \, \dd \gam_x^\epsilon \dd x + \iint_\QQ \partial_{x_\alpha} \Phi_\epsilon^i(x,y) \partial_{y_i} \varphi^\alpha(x,\Phi_\epsilon(x,y)) \dd \gamma_x(y) \dd x \\
		+ \iint_\QQ \partial_{y_i} \Phi^j_\epsilon(x,y) \partial_{y_j} \varphi^\alpha(x,\Phi_\epsilon(x,y)) \dd Q^{\alpha i}_x(y) \dd x = \iint_{\p \Omega \times \R^n} \varphi \cdot \nO \, \dd \gamma^{b,\epsilon}_x \dd \Hau(x),
	\end{multline*}
	where in the first and the last integral we did the change of variables $y \leftrightarrow \Phi_\epsilon(x,y)$, by definition of the push-forward. Then we put the second and third integral together: renaming the indices, 
	\begin{align*}
		\iint_\QQ & \partial_{x_\alpha} \Phi_\epsilon^i(x,y) \partial_{y_i}  \varphi^\alpha(x,\Phi_\epsilon(x,y)) \dd \gamma_x(y) \dd x \\
		& \qquad \qquad + \iint_\QQ \partial_{y_i} \Phi^j_\epsilon(x,y) \partial_{y_j} \varphi^\alpha(x,\Phi_\epsilon(x,y)) \dd Q^{\alpha i}_x(y) \dd x \\
		& = \iint_\QQ \partial_{y_i} \varphi^\alpha(x,\Phi_\epsilon(x,y)) \Bigg(  \partial_{x_\alpha} \Phi_\epsilon^i(x,y)  \dd \gamma_x(y)  + \partial_{y_j} \Phi^i_\epsilon(x,y) \dd Q^{\alpha j}_x(y) \Bigg) \dd x \\
		& = \iint_\QQ \nabla_y \varphi : \dd \q^\epsilon_x \, \dd x
	\end{align*}
	with $\q^\epsilon$ defined as in~\eqref{eq:gam_eps_Q_eps}. Thus, 
	\begin{equation*}
		\iint_\QQ \Div_x \varphi \, \dd \gam^\epsilon_x \dd x + \iint_\QQ \nabla_y \varphi : \dd \q^\epsilon_x \, \dd x = \iint_{\p \Omega \times \R^n} \varphi \cdot \nO \, \dd \gamma^{b,\epsilon}_x \dd \Hau(x).
	\end{equation*}
	As this is valid for any smooth $\varphi$, this gives us the claim. 
	
	\emph{2nd step: variation of the energy}. We now evaluate the energy $\EE(\gam^\epsilon)$. We start with the Dirichlet energy: doing again for a fixed $x$ the change of variables $y \leftrightarrow \Phi_\epsilon(x,y)$, we find easily 
	\begin{align*}
		\Dir(\gam^\epsilon)  \leq \iint_\QQ \left\|\frac{\dd \q^\epsilon_x}{\dd \gam^\epsilon_x} \right\|^2 \dd \gam^\epsilon_x \, \dd x & = \sum_{\alpha, i} \iint_\QQ \left(\frac{\dd ( (\partial_{x_\alpha} \Phi^i_\epsilon)  \gam_x + (\partial_{y_j} \Phi^i_\epsilon) \q^{\alpha j}_x)}{\dd \gam_x} \right)^2 \dd \gam_x \, \dd x \\
		& = \sum_{\alpha, i} \iint_\QQ \left(  \partial_{x_\alpha} \Phi^i_\epsilon + \partial_{y_j} \Phi^i_\epsilon \frac{ \dd \q^{\alpha j}_x}{\dd \gam_x} \right)^2 \dd \gam_x \, \dd x.
	\end{align*}
	Recalling $\Phi_\epsilon(x,y) = y + \epsilon \psi(x,y)$, we have
	\begin{equation*}
		\partial_{x_\alpha} \Phi^i_\epsilon(x,y) = \epsilon \partial_{x_\alpha} \psi^i(x,y) , \qquad
		\partial_{y_j} \Phi^i_\epsilon(x,y) = \delta_{ij} + \epsilon  \partial_{y_j} \psi^i(x,y).
	\end{equation*}
	Thus at first order in $\epsilon$ we obtain 
	\begin{equation*}
		\Dir(\gam^\epsilon) \leq \Dir(\gam) + 2 \epsilon \iint_\QQ \left( \partial_{x_\alpha} \psi^i(x,y) +   \partial_{y_j} \psi^i(x,y) \frac{\dd \q^{\alpha j}}{\dd \gam_x}(y)  \right)  \dd Q^{\alpha i}_x \, \dd x + \mathcal{O}(\epsilon^2).
	\end{equation*}
	On the other hand, for the zero-th order part of the energy, from $\nabla \Rhs = -f$, and as $f$ has at most $p$-growth while we have the moment bound~\eqref{eq:boundedness_moments}, we can differentiate 
	under the integral sign
	and obtain
	\begin{align*}
		\iint_\QQ   \Rhs \,\dd\gam^\epsilon_x\,\dd x   & = \iint_\QQ \Rhs(x,\Phi_\epsilon(x,y)) \,\dd\gam_x(y)\,\dd x \\
		& = \iint_\QQ \Rhs\,\dd\gam_x\,\dd x - \epsilon \iint_\QQ \psi(x) \cdot f  \,\dd\gam_x\,\dd x + o(\epsilon).
	\end{align*}
	A similar computation can be done for $\phib$.
	Writing that $\EE(\gam^\epsilon) \geq \EE(\gam)$ by optimality, given the definition of $\EE$ in~\eqref{e:mainliftedenergy}, and as well as the two estimates above, collecting the terms of order one in $\epsilon$ we have
	\begin{multline*}
		\iint_\QQ \left( \partial_{x_\alpha} \psi^i(x,y) +   \partial_{y_j} \psi^i(x,y) \frac{\dd \q^{\alpha j}}{\dd \gam_x}(y) \right)  \dd Q^{\alpha i}_x(y) \, \dd x \\
		- \iint_\QQ \psi^i(x,y) f^i(x,y) \,d\gam_x(y)\,\dd x - \iint_{\p \QQ} \psi^i(x,y)  f_b^{i}(x,y)\,\dd\gamb_x(y) \,\dd\Hau(x) \geq 0. 
	\end{multline*}
	
	\emph{3rd step: reading the optimality conditions}.
	Changing $\psi$ in $-\psi$, we actually have an equality in the inequality above. We rewrite it as: for any $\psi \in C^1_b(\bar{\Omega} \times \R^n)^n$
	\begin{multline}
		\iint_\QQ  \partial_{x_\alpha} \psi^i(x,y)   \dd Q^{\alpha i}_x(y) \, \dd x + \iint_\QQ \partial_{y_j} \psi^i(x,y)   \left( \frac{\dd \q^{\alpha j}}{\dd \gam_x}(y) \frac{\dd Q^{\alpha i}_x}{\dd \gam_x}(y)   \right) \dd \gam_x(y)  \dd x \\
		=  \iint_\QQ  \psi^i(x,y)  f^i(x,y)  \dd \gam_x(y)   \dd x + \iint_{\p \QQ} \psi^i(x,y)  f^i_b(x,y)\,\dd\gamb_x(y) \,\dd\Hau(x) 
		\label{eq:optimality_weak_form}
	\end{multline}
	where we recall that we sum over repeated indices. 
	Taking $\psi$ which is compactly supported in the interior of $\Omega$, the last term vanishes and we recognize in~\eqref{eq:optimality_weak_form} exactly the weak form of~\eqref{eq:optimality_int_acc}.  Then we fix $\chi \in C^1_c(\R^n)^n$ and choose $\psi(x,y) = \eta(x) \chi(y)$ for $\eta \in C^1(\bar{\Omega})$. We see from~\eqref{eq:optimality_weak_form} that, if we define the function $g : x \mapsto \int_{\R^n} \chi^i(y) \dd Q^{\alpha i}_x(y) \in \R^d$, then $g$ has a divergence in $L^1(\Omega)$ and has normal trace
	\begin{equation*}
		g^b(x) \cdot \nO(x) = \int_{\R^n}  \chi(y) f_b(x,y)\,\dd\gamb_x(y)
	\end{equation*}
	for $\Hau$-a.e. $x \in \p \Omega$. As this is valid for any $\chi$, this precisely gives us~\eqref{eq:optimality_boundary}.
\end{proof}

To further understand our optimality conditions, we link it to the concept of renormalized solutions, similar to the theory of DiPerna-Lions for transport and Boltzmann equations \cite{DiPerna-Lions89,DiPernaLions89A}. In the present case,
assume $u$ is a smooth solution to~\eqref{eq:EL_classical}, and write $W = \sum_{\alpha=1}^d \p_{x_\alpha} u \otimes \p_{x_\alpha} u$, as a matrix-valued field. If $h : \R^n \to \R$ is a smooth function, then the chain rule shows that the function $g : x \mapsto h(u(x))$ satisfies the elliptic system
\begin{equation*}
	\begin{cases}
		-\Delta g(x) + \nabla^2 h(u(x)) : W(x) = \nabla h(u(x)) \cdot f(x,u(x)) & \text{in } \Omega, \\
		\displaystyle{\frac{\partial g}{\partial n_\Omega}(x) = \nabla h(u(x)) \cdot f_b(x,u(x))}  & \text{on } \p \Omega,
	\end{cases}
\end{equation*}
We prove that our measure-valued optimality conditions imply a lift of this ``renormalized'' formulation.

\begin{cor}
	\label{cor:optimality_conditions_h}
	Assume $\gamma$ is a minimizer in~\eqref{e:mainlifted}, write $Q$ for the tangent flux, and assume the same integrability conditions as in Theorem~\ref{theo:optimality_conditions}. We define the matrix-valued field $W \in L^1(\Omega, \cM(\Omega)^{n \times n})$
	\begin{equation*}
		W^{ij}_x = \sum_{\alpha=1}^d \frac{\dd Q^{\alpha i}_x}{\dd \gam_x} \frac{\dd Q^{\alpha j}_x}{\dd \gam_x} \gam_x
	\end{equation*}
	Consider $h : \R^n \to \R$ smooth with compact support and define $$g(x) = \int_{\R^n} h(y) \, \dd \gam_x(y).$$ Then $g$ satisfies in a weak sense 
	\begin{equation} \label{e:renorm}
		\begin{cases}
			\displaystyle{-\Delta g(x) + \int_{\R^n} \nabla^2 h(y) : \dd W_x(y)  = \int_{\R^n} \nabla h(y) \cdot f(x,y)  \, \dd \gam_x(y)} & \text{in } \Omega, \\[3ex]
			\displaystyle{\frac{\partial g}{\partial n_\Omega}(x) = \int_{\R^n} \nabla h(y) \cdot f_b(x,y) \dd \gamb_x(y)} & \text{on } \p \Omega. \\
		\end{cases}
	\end{equation}
\end{cor}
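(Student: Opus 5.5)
The plan is to test the weak form~\eqref{eq:optimality_weak_form} of the optimality conditions with a well-chosen $\psi$, and to combine it with Lemma~\ref{lem:weak_der_int} and Lemma~\ref{lem:weak_der_int_b}. By ``weak sense'' in~\eqref{e:renorm} we mean: for every $\eta \in C^1(\bar\Omega)$,
\begin{equation*}
\int_\Omega \nabla g\cdot\nabla\eta \,\dd x + \iint_\QQ \eta(x)\,\nabla^2 h(y) : \dd W_x(y)\,\dd x = \iint_\QQ \eta\, \nabla h\cdot f \,\dd\gam_x\dd x + \iint_{\p\QQ} \eta\,\nabla h\cdot f_b\,\dd\gamb_x\dd\Hau .
\end{equation*}

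\emph{Step 1: the gradient of $g$.} Since $h$ is smooth with compact support, $\varphi(x,y)=h(y)$ belongs to $C^1_b(\QQ)$. Lemma~\ref{lem:weak_der_int} then gives $g\in W^{1,1}\cap L^\infty$ and
\begin{equation*}
\p_{x_\alpha} g(x)=\int_{\R^n}\p_{y_i}h(y)\,\dd Q^{\alpha i}_x(y),
\end{equation*}
with no $x$-derivative term. Moreover $g$ actually lies in $H^1$: by Cauchy--Schwarz, $|\nabla g|^2\le \|\nabla h\|_\infty^2\int\|\dd Q_x/\dd\gam_x\|^2\dd\gam_x$, which is integrable because $\Dir(\gam)<\infty$. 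So $\nabla g$ is an honest $L^2$ field and the left-hand side above makes sense.

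\emph{Step 2: choice of test function.} In~\eqref{eq:optimality_weak_form}, take $\psi^i(x,y)=\eta(x)\,\p_{y_i}h(y)$ with $\eta\in C^1(\bar\Omega)$. This $\psi$ is in $C^1_b(\bar\Omega\times\R^n)^n$, so it is admissible. Then $\p_{x_\alpha}\psi^i=\p_\alpha\eta\,\p_{y_i}h$, and the first term of~\eqref{eq:optimality_weak_form} becomes $\int_\Omega\p_\alpha\eta(x)\int\p_{y_i}h\,\dd Q^{\alpha i}_x\,\dd x=\int_\Omega\nabla\eta\cdot\nabla g\,\dd x$ by Step~1. Next, $\p_{y_j}\psi^i=\eta\,\p^2_{y_iy_j}h$, so the second term equals $\iint\eta\,\p_{ij}h\,\dd W^{ij}_x\,\dd x$ by the definition of $W$. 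The right-hand side of~\eqref{eq:optimality_weak_form} is exactly the right-hand side of the weak formulation.

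\emph{Step 3: reading off both equations.} Taking $\eta\in C^1_c(\Omega)$ gives the interior equation in the sense of distributions. The identity for general $\eta\in C^1(\bar\Omega)$ then encodes the Neumann condition: $\nabla g$ has an $L^1$ divergence, equal to $\int\nabla^2h:\dd W_x-\int\nabla h\cdot f\,\dd\gam_x$. That the right-hand side is $L^1$ uses $W_x$ having finite total mass $\int\|\dd Q_x/\dd\gam_x\|^2\dd\gam_x$, which is integrable in $x$, together with the moment assumption~\eqref{eq:boundedness_moments} and the growth of $f$. Hence $\nabla g$ has a normal trace, and comparing with the Gauss--Green formula identifies it as $\int\nabla h\cdot f_b\,\dd\gamb_x$.

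\emph{Main obstacle.} The computation itself is direct. The only real care needed is to check the integrability of every term, so that~\eqref{eq:optimality_weak_form} applies to this $\psi$ and the boundary term is meaningful. This is where the growth assumptions on $f,f_b$ and~\eqref{eq:boundedness_moments} enter, exactly as in the proof of Theorem~\ref{theo:optimality_conditions}. Since $\nabla h$ is bounded, these integrability checks are immediate.
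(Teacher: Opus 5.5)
Your proposal is correct and follows the paper's own proof: you compute $\nabla g$ via Lemma~\ref{lem:weak_der_int} and then test \eqref{eq:optimality_weak_form} with $\psi(x,y)=\eta(x)\nabla h(y)$, which yields exactly the weak form of \eqref{e:renorm}. Your extra checks (that $g\in H^1$, and that every term is integrable) are correct details that the paper leaves implicit.
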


\begin{proof}
	From Lemma~\ref{lem:weak_der_int}, $g$ is in ($W^{1,1}\cap L^\infty)(\Omega)$ with $\nabla g(x) = \int_{\R^n} \nabla h(y) \dd Q_x(y)$. 
	Thus, if $\eta \in C^\infty(\bar{\Omega})$, using~\eqref{eq:optimality_weak_form} with $\psi(x,y) = \eta(x) \nabla h(y)$ and Fubini's theorem 
	\begin{align*}
		\int_\Omega \nabla \eta(x) \cdot \nabla g(x)  \, \dd x   = & - \int_\Omega \eta(x) \left( \int_{\R^n} \nabla^2 h(y) :  W_x(y) \dd \gam_x(y) \right)  \dd x \\
		& + \int_\Omega \eta(x) \left( \int_{\R^n} \nabla h(y) \cdot f(x,y) \dd \gam_x(y) \right)  \dd x \\
		& + \int_{\p\Omega} \eta(x) \left( \int_{\R^n} \nabla h(y) \cdot f_b(x,y) \dd \gamb_x(y) \right) \,\dd\Hau(x), 
	\end{align*}
	which is exactly the weak form of \eqref{e:renorm}. 
\end{proof}

\begin{rmk} In the pioneering works of DiPerna and Lions, the test functions had a scalar argument (since they were not studying systems), and this scalar structure seems to have been important because of certain commutator issues. Nevertheless, some authors have studied systems using a similar approach, cf. \cite{JF16}.
\end{rmk}

\subsection{Comparison with DiPerna's measure-valued solutions} \label{rem:dip} This section is a digression that serves to highlight the novelty of our approach relative to the classical framework of DiPerna \cite{DiPerna85}. Let
\[
\nu = (\nu_x)_{x \in \Omega},
\qquad
\nu_x \in \mathcal{P}(\mathbb{R}^n),
\]
be a measurable family of probability measures on $\mathbb{R}^n$. 
Mimicking\footnote{DiPerna did not introduce his notion for elliptic problems, but rather for first-order hyperbolic equations. This idea has subsequently been developed primarily within the hyperbolic PDE and fluid dynamics communities, cf.~\cite{DiPernaM87,MNRR96,BLS11,FGW16,SV17}. Here, we are discussing a straightforward analogue of DiPerna’s notion in the elliptic setting.} the celebrated  definition from \cite[p. 235]{DiPerna85}, we say that $\nu$ is a \emph{DiPerna measure-valued solution} to
\begin{equation}
	-\Delta u = f(x,u) \label{eeq}
\end{equation} (for simplicity, here we temporarily ignore the boundary conditions)
if for every test function $\varphi \in C_c^\infty(\Omega)$,
\[
\int_{\Omega}
\left(
\int_{\mathbb{R}^n} y \, \Delta \varphi(x) \, \dd \nu_x(y)
\right)
\dd x
+
\int_{\Omega}
\left(
\int_{\mathbb{R}^n} f(x,y) \, \dd \nu_x(y)
\right)
\varphi(x) \, \dd x
=
0.
\]

Equivalently, let
\[
\bar{u}(x) \coloneqq \int_{\mathbb{R}^n} y \, \dd \nu_x(y)
\]
be the barycenter of $\nu_x$, and define
\[
\overline{f(\nu_x)} \coloneqq \int_{\mathbb{R}^n} f(x,y) \, \dd\nu_x(y).
\]
Then DiPerna's measure-valued formulation can be written as
\begin{equation} \label{e:dpstr}
	-\Delta \bar{u} = \overline{f(\nu_x)} 
\end{equation}
in the sense of distributions on $\Omega$.

A natural question is what is the connection between our measure-valued solutions to~\eqref{eeq}, in the sense of Section~\ref{clas-lif}, and DiPerna solutions. 

Consider first the harmonic case $f\equiv 0$. Then, the Ishihara property \cite{HL19,LMTV24} implies, at least formally, that the first moment of our solution $\gamma_x$ is both subharmonic and superharmonic in $x$ (because the first moment functional is geodesically affine on the Wasserstein space). Consequently,
\[
-\Delta \bar{u}=0,
\]
and hence our solution is necessarily a DiPerna solution. The converse is, of course, not true, since any measure-valued map $\nu_x$ with harmonic barycenter is harmonic in the sense of DiPerna, but need not minimize the corresponding energy $\EE$. 


For a general $f$, the DiPerna-style definition above still relies only on the barycenter of a measure-valued map to compute the Laplacian. The fact that the solution is measure-valued significantly affects only the zeroth-order, possibly nonlinear, term $f$. By contrast, our approach, based on the Wasserstein lift, genuinely computes a Laplacian at the level of measure-valued maps. This appears to be a fundamental advantage. As we have seen in Section~\ref{sec-opt}, our definition captures substantially more information in a much more refined manner. More specifically, letting $h(y)=y_i,\  i=1,\dots,n$, in~\eqref{e:renorm}, we formally\footnote{Rigorous derivation of~\eqref{e:renorm} for linear $h$, hence non-compactly supported, lies beyond the scope of this article.} derive 
$$
-\Delta \left[\int_{\R^n} y \, \dd \gam_x(y)\right] = \int_{\R^n} f(x,y)  \, \dd \gam_x(y),
$$
which is exactly \eqref{e:dpstr}. This suggests that a solution $\gamma$ of our lifted problem~\eqref{e:mainlifted} is, in particular, a DiPerna measure-valued solution in the sense of Section~\ref{rem:dip}. The converse is not expected to hold, as discussed above.  Notably, our definition retains the refined nonlinear term involving the field $W$ in~\eqref{e:renorm}, which is not captured by DiPerna's approach. 

\subsection{Convex case} \label{s:consis}

We move to proving the ``consistency'' of our lifted problem with respect to the classical one: if $\Rhs$ and $\phib$ are convex as functions of their second argument, then nothing is gained from going from the unlifted to the lifted problem, in the sense that, provided a classical solution exists, its Dirac lift is a deterministic solution of the lifted problem.

\begin{theo}
	Let $\Rhs$ and $\phib$ be a pair of Carath\'eodory functions. Assume that $\Rhs(x,\cdot)$ and $\phib(x,\cdot)$ are convex for a.e. $x$ and that there exists at least one constant function $u$ such that $\EEu(u) < + \infty$.
	Then we have
	\begin{equation}
		\label{eq:equality_inf}
		\inf_{\gam\in L^0(\Omega, \cP(\R^n))} \EE(\gam) = \inf_{u\in L^0(\Omega,\R^n)} \EEu(u)
	\end{equation}
	(being potentially both infima $- \infty$) and, if there exists a solution $u^*$ to~\eqref{e:mainnotlifted}, then $\gam^*:x \mapsto \delta_{u^*(x)}$ is a solution to~\eqref{e:mainlifted}.
\end{theo}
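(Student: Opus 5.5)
The plan is a barycenter (Jensen) argument. The lifting identity~\eqref{eq:lifting_identity} already gives $\inf \EE \leq \inf \EEu$. For the reverse inequality, take any $\gam$ with $\EE(\gam) < +\infty$ and produce a classical map $u$ with $\EEu(u) \leq \EE(\gam)$. The natural candidate is the barycenter $u(x) = \int_{\R^n} y \, \dd \gam_x(y)$. Once~\eqref{eq:equality_inf} holds, the second claim is immediate: if $u^*$ minimizes $\EEu$, then $\EE(\gam^*) = \EEu(u^*) = \inf \EEu = \inf \EE$.

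\textbf{Step 1: first moments.} We need $M_{1,\Omega}(\gam)<\infty$ and $M_{1,\p\Omega}(\gamb)<\infty$, so that the barycenters are defined. Convexity gives affine lower bounds $\Rhs(x,y) \geq \Rhs(x,y_0) + \xi(x)\cdot(y-y_0)$ and similarly for $\phib$, so $\Rhs$ and $\phib$ are $1$-bounded from below. This alone does not bound the first moment. Two routes are available:
\begin{itemize}
\item[(a)] Treat the case $M_{1,\Omega}(\gam)=\infty$ separately. Approximate $\gam$ by pushing forward through the truncations $y \mapsto y\,\min(1,R/|y|)$. These are $1$-Lipschitz maps, so they do not increase $\Dir$ (by the change-of-variables computation of Theorem~\ref{theo:optimality_conditions}). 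Then let $R\to\infty$. But truncation need not decrease the convex terms.
\item[(b)] Avoid moments entirely. Use Jensen with convex functions $\Rhs(x,\cdot)$ bounded below by affine functions, and work with the barycenter only when it exists.
\end{itemize}
I would first try to show that finite energy forces a finite first moment, using Proposition~\ref{prop:trace_moments} with $p=1$ combined with the affine lower bounds. Failing that, I would restrict to $\gam$ with compactly supported values, via a density argument, and use the lower semicontinuity and continuity properties of the energies.

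\textbf{Step 2: $u\in H^1$ with $\frac12\int|\nabla u|^2 \leq \frac12\Dir(\gam)$.} Apply Lemma~\ref{lem:weak_der_int} with $\varphi(y)=y$, after a truncation argument since $y$ is not bounded. This gives $\nabla u(x) = Q_x(\R^n)$, i.e.\ $\p_{x_\alpha} u^i = \int \dd Q^{\alpha i}_x = \int v^{\alpha i}\,\dd\gam_x$. Cauchy--Schwarz in $\dd\gam_x$ then yields $|\nabla u(x)|^2 \leq \int \|v_x\|^2 \dd\gam_x$. Lemma~\ref{lem:weak_der_int_b} identifies the trace of $u$ with the barycenter of $\gamb_x$.

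\textbf{Step 3: zeroth-order terms.} By Jensen, $\Rhs(x,u(x)) \leq \int \Rhs(x,y)\,\dd\gam_x(y)$, and likewise $\phib(x,u(x)) \leq \int \phib(x,y)\,\dd\gamb_x$. The negative parts of $\Rhs(x,u)$ and $\phib(x,u)$ are integrable thanks to the affine minorants and the integrability of $u$ and its trace. Summing gives $\EEu(u)\leq\EE(\gam)$.

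The existence of a constant $u$ with finite energy will be used to choose the base point $y_0$ in the affine minorants with integrable coefficients. The main obstacle is the moment and truncation issue in Steps 1--2: passing $\varphi(y)=y$ through Lemma~\ref{lem:weak_der_int} and ensuring the barycenters are finite. I expect to handle this with $\varphi_R(y)=y\,\eta(|y|/R)$ and dominated convergence, which again requires $M_1<\infty$.
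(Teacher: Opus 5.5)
There is a genuine gap, and it sits exactly at what you call the main obstacle. For a $\gam$ with $\EE(\gam)\in\R$ but possibly infinite first moment, you never produce a competitor to which the barycenter argument applies.

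Your first plan cannot work, because finite energy does not force $M_{1,\Omega}(\gam)<\infty$. Take $\Rhs=\phib\equiv0$ and $\gam_x\equiv\sigma$ for a fixed $\sigma$ with $\int|y|\,\dd\sigma=+\infty$; then $\Dir(\gam)=0=\EE(\gam)$. Proposition~\ref{prop:trace_moments} only transfers moments between $\Omega$ and $\p\Omega$; it never creates them. Affine minorants give no coercivity either.

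The fallback does not close the gap. Lower semicontinuity yields $\EE(\gam)\le\liminf_R\EE(\gam_R)$, which is the wrong direction. Also, $\EE$ has no continuity for merely Carath\'eodory convex integrands without growth bounds. What is needed is an explicit approximation with $\limsup_R\EE(\gam_R)\le\EE(\gam)$, and you abandon route (a) precisely where the proof happens.

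The missing idea (the paper's Step 1) is that truncation need not \emph{decrease} the convex terms. It suffices that it increases them by an amount that vanishes as $R\to\infty$. This holds if the retraction is centered at the point $y_0$ provided by the hypothesis, not at $0$, where $\Rhs(\cdot,0)$ need not be integrable.

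The paper pushes $\gam$ forward by a smooth non-expansive radial retraction $r_D$ onto $D=B(y_0,R)$, equal to the identity on $B(y_0,R/2)$. Non-expansiveness controls $\Dir$. Pushforward commutes with traces, as in Step 1 of the proof of Theorem~\ref{theo:optimality_conditions}; smoothness of $r_D$ makes that computation directly applicable, unlike your $y\min(1,R/|y|)$. Since $r_D(y)=y_0+\alpha(y-y_0)$ with $\alpha\in[0,1]$, convexity gives
\begin{equation*}
\Rhs(x,r_D(y))-\Rhs(x,y)\le(1-\alpha)\bigl(\Rhs(x,y_0)-\Rhs(x,y)\bigr)\le|\Rhs(x,y)|+|\Rhs(x,y_0)|,
\end{equation*}
and the left-hand side vanishes for $|y-y_0|\le R/2$. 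Because $\EE(\gam)\in\R$ forces $|\Rhs|\in L^1(\dd\gam_x\dd x)$, and $\Rhs(\cdot,y_0)\in L^1(\Omega)$, the integral of the right-hand side over $\{|y-y_0|>R/2\}$ tends to $0$. The same holds for $\phib$ and $\gamb$. The retracted map takes values in a compact ball, so your Steps 2--3 then apply verbatim and coincide with the paper's Step 2, with no moment bound ever needed.

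A smaller point concerns Step 3. You claim $\Rhs(x,u(x))_-$ is integrable ``thanks to the affine minorants''. That would require the slope $\xi(x)$ of the minorant to be integrable against $|u-y_0|$, which the hypotheses do not give. For instance, take $n=1$, $\phib\equiv0$, $\Rhs(x,y)=a(x)(|y|-1)$ with $a\ge0$, $a\notin L^1(\Omega)$, and $y_0=1$. The map $\gam_x\equiv\frac12(\delta_1+\delta_{-1})$ has $\EE(\gam)=0$, but its barycenter is $0$ and $\Rhs(\cdot,0)_-=a$. The paper's Jensen step is silent on this as well, so this step needs an extra argument in either approach.
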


As every classical competitor $u$ can be lifted into the deterministic map $\gam_x = \delta_{u(x)}$ with $\EE(\gam) = \EEu(u)$, we always have 
\begin{equation}
	\label{eq:ineq_obvious_lift}
	\inf_\gam \EE(\gam) \leq \inf_u \EEu(u),
\end{equation}
so we only need to prove the reverse inequality. The overall idea of the proof is that, from a measure-valued solution $\gam$, we can average it and define $u(x) = \int_{\R^n} y \dd \gam_x(y)$. That defines a classical solution with lower energy thanks to Jensen's inequality. However, if we do not have a finite first moment in the $y$ variable, then we cannot define directly such a $u$. We need first to project the solution in a bounded set in the $y$ variable in this case. 

\begin{proof}
	Let us take $\gam \in L^0(\Omega,\cP(\R^n))$. Without loss of generality, we only need to handle the case $\EE(\gam) \in \R$, which means in particular
	\begin{equation}
		\label{eq:bound_a_priori_competitor}
		\Dir(\gam) < + \infty; \; \iint_\QQ |\Rhs| \, \dd \gam_x \dd x < + \infty; \; \iint_{\p \Omega \times \R^n} |\phib| \dd \gamb_{x} \dd \Hau(x) < + \infty. 
	\end{equation}
	
	\emph{1st step: reducing to a bounded set in the $y$ variable.} We first claim that for any $\epsilon>0$ there exist a ball $D\subset \R^n$ and $\tilde \gam \in L^0(\Omega,  \cP(D))$ such that  
	\begin{equation} \label{e:epsil1} 
		\EE( \gam)+3\epsilon \geq \EE(\tilde \gam). 
	\end{equation}
	Indeed, let $y_0 \in \R^n$ be such that the constant function equal to $y_0$ has finite $\EEu$ energy. Equivalently, $\Rhs(\cdot,y_0)$ and $\phib(\cdot,y_0)$ are integrable functions.
	We take $D=B(y_0,R)\subset \R^n$ a ball independent of $x$. We define the probability measure $\tilde \gam_x$ as the pushforward of $\gam_x$ by a non-expansive retraction $r_D:\R^n\to D$, 
	$$r_D(y) \coloneqq y_0+ h(|y-y_0|) \frac {y-y_0}{|y-y_0|},$$ 
	where $h(s)$ is a smooth strictly increasing non-expansive function of $s\geq 0$ such that $h(s)=s$ for $s\leq R/2$ and $h(s)<\min (R,s)$ for $s>R/2$. It follows from the argument of the proof of \cite[Lemma A.4]{HL23} that 
	\begin{equation} \label{e:direps} 
		\Dir( \gam)+\epsilon  \geq \Dir(\tilde\gamma)
	\end{equation} 
	provided $R$ is sufficiently large.  We now claim that, for sufficiently large $R$, 
	\begin{equation} \label{e:epsest1} 
		\iint_\QQD \Rhs(x,y) \,\dd \tilde\gam_x(y)\,\dd x \leq \iint_\QQ \Rhs(x,y) \,\dd\gam_x(y)\,\dd x+\epsilon.  
	\end{equation}
	To see this, leveraging the convexity of $\Rhs$ w.r.t.\ $y$, we get for every $\alpha \in [0,1]$ and every $y \in \R^n$,
	\begin{equation*} 
		\Rhs(x,y_0+\alpha (y-y_0))-\Rhs(x,y) \leq (\alpha-1) \Rhs(x,y)+(1-\alpha) \Rhs(x,y_0)
		\leq  |\Rhs(x,y)|+ |\Rhs(x,y_0)|.
	\end{equation*}
	Thus we can  estimate 
	\begin{align*} 
		\iint_\QQD \Rhs(x,y) \,\dd \tilde \gam_x(y)\,\dd x & -  \iint_\QQ \Rhs(x,y) \,\dd\gam_x(y)\,\dd x \\ 
		& =  \iint_\QQ \left[\Rhs(x,r_D(y)) - \Rhs(x,y)\right] \,\dd \gam_x(y)\,\dd x \\
		& = \iint_{\Omega \times (\R^n \backslash B_{R/2}(y_0))} \left[\Rhs(x,r_D(y)) - \Rhs(x,y)\right] \,\dd \gam_x(y)\,\dd x\\ 
		& \leq  \iint_{\Omega \times (\R^n \backslash B_{R/2}(y_0))} ( |\Rhs(x,y)|+ |\Rhs(x,y_0)| ) \,\dd\gam_x(y)\,\dd x. 
	\end{align*}
	Since the last integrand is in $L^1(\QQ)$ w.r.t. the probability  measure $\dd \gam_x \dd x$ thanks to~\eqref{eq:bound_a_priori_competitor} (and thus it is uniformly integrable), for $R$ sufficiently large the last term is arbitrarily small. We deduce~\eqref{e:epsest1}. Next, the exact same reasoning yields that for sufficiently large $R$, 
	\begin{equation} \label{e:epsest2} 
		\iint_{\p \QQD} \phib(x,y) \,\dd\tilde{\gamma}^b_x(y)\,\dd \Hau(x) \leq \iint_{\p \QQ} \phib(x,y) \,\dd\gamb_x(y)\,\dd \Hau(x)+\epsilon. 
	\end{equation} 
	Eventually, combining~\eqref{e:direps}, \eqref{e:epsest1} and~\eqref{e:epsest2}, we obtain the estimate~\eqref{e:epsil1}.
	
	\emph{2nd step: averaging in $y$ and concluding by Jensen.} Now that $\tilde{\gam}$ is supported on a compact set in the $y$ variable, we can use a standard procedure to average it into a classical function. Specifically, we claim that, defining
	\begin{equation*}
		\tilde{u}(x) = \int_D y \, \dd \tilde \gam_x(y), 
	\end{equation*} 
	(note that $\tilde{u}$ is well defined as $D$ is compact) then we have 
	\begin{equation}
		\label{e:epsil2}
		\EEu(\tilde{u}) \leq \EE(\tilde{\gamma}).
	\end{equation}
	Indeed, let us call $\tilde{\q}$ the tangent flux. From Lemma~\ref{lem:weak_der_int} and Lemma~\ref{lem:weak_der_int_b} (using for $\varphi(x,y)$ a function independent of $x$ which coincides with $y$ on $D$), $\tilde{u} \in (W^{1,1}\cap L^\infty)(\Omega)$ with $\nabla \tilde{u}(x) = \tilde{\q}_x(D)$ and boundary values $\tilde{u}(x) = \int_D y \, \dd \tilde{\gamma}^b_x(y)$ for $x \in \p \Omega$. 
	Jensen's inequality gives
	\begin{equation*}
		\int_\Omega \|\nabla \tilde{u}(x)\|^2 \, \dd x = \int_\Omega \left\| \int_D \frac{\dd \tilde{\q}_x}{\dd \tilde{\gam}_x} \dd \gam_x(y) \right\|^2 \dd x \leq \Dir(\tilde{\gam}).
	\end{equation*}
	The convexity of $\Rhs$ and $\phib$, again with Jensen's inequality, yields
	\begin{align*}
		\int_\Omega \Rhs(x,\tilde{u}(x)) \, \dd x & \leq \iint_\QQD \Rhs(x,y) \,\dd \tilde\gam_x(y)\,\dd x \\ 
		\int_{\p \Omega} \phib(x,\tilde{u}^b(x)) \, \dd \Hau(x) & \leq \iint_{\p \QQD} \phib(x,y) \,\dd\tilde{\gamma}^b_x(y)\,\dd \Hau(x).
	\end{align*}
	Summing the three last inequalities yields~\eqref{e:epsil2}. Together with~\eqref{e:epsil1} we have
	\begin{equation*}
		\EEu(\tilde{u}) \leq \EE(\tilde{\gam}) \leq \EE(\gam) + 3 \epsilon,
	\end{equation*}
	which combined with~\eqref{eq:ineq_obvious_lift} and the arbitrariness of $\epsilon$ gives~\eqref{eq:equality_inf}.
	
	Once we have~\eqref{eq:equality_inf}, it is clear that if $u^*$ is a minimizer of $\EEu$ then $\gam^* : x \mapsto \delta_{u^*(x)}$ is a minimizer of $\EE$. 
\end{proof}

\subsection{One-dimensional case} \label{s:1d}

We consider the case where $\Omega$ is a segment of $\R$, that we call $I$. Up to renormalization, we have $I = [0,1]$ the unit interval. In particular $\phib$ is the collection of two functions $\phib(0,\cdot)$ and $\phib(1,\cdot)$. Let us recall the superposition principle for curves of probability measures. In the sequel, $e_x : C(I,\R^n) \to \R^n$ is the evaluation map of a continuous function at time $x \in I$.

\begin{theo}
	\label{theo:superposition}
	With $I = [0,1]$ take $\gam \in L^0(I,  \cP(\R^n))$ such that $\Dir(\gam) < + \infty$. Then there exists $P \in \cP(H^1(I,\R^n))$ such that $\gam_x = {e_x}_\# P$ for all $x \in [0,1]$, and 
	\begin{equation*}
		\Dir(\gam) = \int_{H^1(I,\R^n)} \left( \int_0^1 |\dot{u}(x)|^2 \dd x \right) \dd P(u).
	\end{equation*}
\end{theo}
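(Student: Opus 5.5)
The plan is to reduce the statement to the classical superposition principle for absolutely continuous curves in the Wasserstein space, as in \cite[Theorem 8.2.1]{AmbrosioGigliSavare08}, with a truncation step to handle the absence of moment assumptions on $\gam_x$. When $d=1$, the tangent flux $Q$ is an $\R^n$-valued measure with $Q_x\ll\gam_x$. Writing $v_x = \frac{\dd Q_x}{\dd \gam_x}$, the generalized continuity equation~\eqref{eq:CE} becomes the ordinary continuity equation $\partial_x \gam + \Div_y(v\gam)=0$ on $(0,1)\times\R^n$, with $\int_0^1\int |v_x|^2\,\dd\gam_x\,\dd x = \Dir(\gam)<+\infty$. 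Lemma~\ref{lem:weak_der_int} shows that $x\mapsto \int\varphi\,\dd\gam_x$ is $W^{1,1}$ for every $\varphi\in C^1_b$, hence absolutely continuous. So $\gam$ has a narrowly continuous representative, and from now on I use this representative, which is what makes ``for all $x$'' meaningful; Lemma~\ref{lem:weak_der_int_b} identifies its endpoint values with $\gamb$.

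The standard superposition theorem assumes finite moments, or at least $\int_0^1\int|v|\,\dd\gam_x\,\dd x<\infty$. The second condition holds here by Cauchy--Schwarz, but I will make the argument robust anyway. For each $R$ I restrict to a bounded region, or equivalently push $\gam$ through the retraction $r_D$ used in Section~\ref{s:consis}. A cleaner route is to invoke the version of the superposition principle that needs only $\int_0^1\int|v_x|\,\dd\gam_x\,\dd x<\infty$, with no moment condition; \cite[Theorem 8.2.1]{AmbrosioGigliSavare08} is proved under exactly this hypothesis in $\R^n$ (regularize, use the flow, pass to the limit via tightness in $C(I,\R^n)$). Tightness of the approximating path measures comes from the compact sublevel sets of the functional $u\mapsto |u(0)|$-type terms plus $\int|\dot u|^2$, combined with tightness of $\gam_0$.

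This yields $P$ concentrated on absolutely continuous curves solving $\dot u(x)=v_x(u(x))$ for a.e.\ $x$, with ${e_x}_\#P=\gam_x$. Then
\begin{equation*}
\int\int_0^1|\dot u(x)|^2\,\dd x\,\dd P(u)=\int_0^1\int|v_x|^2\,\dd\gam_x\,\dd x=\Dir(\gam),
\end{equation*}
so $P$-a.e.\ curve is in $H^1$, and $P\in\cP(H^1(I,\R^n))$ after checking that $H^1$ is a Borel subset of $C(I,\R^n)$.

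Equality, and not just ``$\le$'', is automatic because $v$ is the tangent velocity. Conversely, any $P$ with the stated marginals defines via $Q_x = {e_x}_\#(\dot u(x)P)$ a flux satisfying~\eqref{eq:CE}, and Jensen gives $\Dir(\gam)\le\int\int|\dot u|^2\,\dd P$. This confirms that the displayed identity holds precisely for the $P$ built from the tangent field.

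The main obstacle is the lack of moment bounds on $\gam_x$, which is needed to apply the superposition principle verbatim. I expect to handle it by noting that the proof of \cite[Theorem 8.2.1]{AmbrosioGigliSavare08} only uses $\int|v|\,\dd\gam_x\,\dd x<\infty$ together with tightness. If that fails, the fallback is to apply the theorem to $\gam$ composed with a diffeomorphism $\R^n\to B_1$ that maps curves to curves, and then pull back.
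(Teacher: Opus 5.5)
Your proof is correct, but it goes through a different key theorem than the paper. The paper argues metrically. It invokes Lisini's result to show that $x\mapsto\gam_x$ is an $\mathrm{AC}_2$ curve in the \emph{extended} Wasserstein space $(\cP(\R^n),W_2)$. It then applies Lisini's superposition principle for such curves, so the energy identity passes through the metric speed of the curve.

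You bypass the metric layer. You apply the Ambrosio--Gigli--Savar\'e superposition theorem directly to $\partial_x\gam+\Div_y(v\gam)=0$, with $v$ the tangent velocity. The energy identity then follows by hand from four ingredients:
\begin{itemize}
\item $\dot u(x)=v_x(u(x))$;
\item Tonelli, since $(x,u)\mapsto v_x(u(x))$ is Borel for a Borel representative of $v$;
\item the marginal constraint ${e_x}_\#P=\gam_x$;
\item $\iint|v_x|^2\,\dd\gam_x\,\dd x=\Dir(\gam)$.
\end{itemize}
Your Jensen remark then shows that no other admissible $P$ can have smaller energy.

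What each route buys:
\begin{itemize}
\item Yours is more self-contained and never needs the extended-metric framework. The theorem you cite lives in $\cP(\R^n)$ with no moment assumption, with tightness coming from narrow continuity. It also shows that $P$ is carried by integral curves of the tangent field.
\item The paper's is shorter given Lisini's theorems. It also records the metric $\mathrm{AC}_2$ regularity, in line with the metric-geometric reading of $\Dir$.
\end{itemize}

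Some small corrections:
\begin{itemize}
\item The theorem you cite requires $\int_0^1\!\int|v_x|^p\,\dd\gam_x\,\dd x<\infty$ for some $p>1$, not $p=1$. This is what gives $\psi(u(0))+\int_0^1|\dot u|^p$ compact sublevels. It is harmless here since you have $p=2$.
\item Consequently the truncation through $r_D$ and the diffeomorphism fallback are unnecessary and can be dropped. The truncation is in any case not ``equivalent'', since it changes the curve.
\item Assembling a narrowly continuous representative needs more than Lemma~\ref{lem:weak_der_int}. You also need the usual mass-preservation (continuous-representative) argument of Ambrosio--Gigli--Savar\'e, which again only uses $\iint|v_x|\,\dd\gam_x\,\dd x<\infty$.
\end{itemize}
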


\begin{proof}
	The continuity equation~\eqref{eq:CE} reduces to the classical continuity equation. From \cite[Theorem~8]{Lisini07} we deduce that $x \mapsto \gam_x$ belongs to $\mathrm{AC}_2(I, \cP(\R^n))$. Here $(\cP(\R^n),W_2)$ is an extended metric space, but we can still apply the superposition principle, see \cite[Corollary~1]{Lisini07} and also \cite{Lisini16}.
\end{proof}

\begin{cor} \label{cor-det}
	With $\Omega = [0,1]$, we have
	\begin{equation*}
		\inf_\gam \EE(\gam) = \inf_u \EEu(u); 
	\end{equation*}
	(being potentially both infima $- \infty$) and if there exists a solution $u^*$ to~\eqref{e:mainnotlifted} then $\gam^*:x \mapsto \delta_{u^*(x)}$ is a solution to~\eqref{e:mainlifted}.
\end{cor}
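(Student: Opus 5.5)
By the lifting identity~\eqref{eq:lifting_identity} we already have~\eqref{eq:ineq_obvious_lift}, so only the reverse inequality $\inf_u \EEu(u) \le \inf_\gam \EE(\gam)$ needs proof. The plan is to represent any lifted competitor as a superposition of classical curves using Theorem~\ref{theo:superposition}, and then observe that $\EE$ is linear (an average) over this superposition. Fix $\gam$ with $\EE(\gam) < +\infty$. Then $\Dir(\gam) < +\infty$, and the negative parts of $\Rhs$ and $\phib$ are integrable against $\dd\gam_x \dd x$ and $\gamb$, respectively. Theorem~\ref{theo:superposition} gives $P \in \cP(H^1(I,\R^n))$ with $\gam_x = {e_x}_\# P$ and $\Dir(\gam) = \int \int_0^1 |\dot u|^2 \, \dd x \, \dd P(u)$.

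The first key step is to identify the boundary trace. I claim that $\gamb_0 = {e_0}_\# P$ and $\gamb_1 = {e_1}_\# P$, where $\Hau$ on $\p I$ is the counting measure on $\{0,1\}$. To prove this, take $\varphi \in C^1_b(\R^n)$ and set $g(x) = \int \varphi \, \dd\gam_x = \int \varphi(u(x)) \, \dd P(u)$. Since $P$-a.e. $u$ is in $H^1$, hence continuous on $[0,1]$, dominated convergence gives $g(x) \to \int \varphi(u(0)) \, \dd P(u)$ as $x \to 0$. On the other hand, Lemma~\ref{lem:weak_der_int_b} says that the trace of $g$ at $0$ is $\int \varphi \, \dd\gamb_0$. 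In one dimension a $W^{1,1}$ function is absolutely continuous and its trace is the endpoint limit, so the two expressions agree. Since this holds for all such $\varphi$, the measures coincide; the same argument applies at $x=1$.

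With the traces identified, Fubini/Tonelli gives
\[
\EE(\gam) = \int_{H^1(I,\R^n)} \EEu(u) \, \dd P(u).
\]
Each term splits in the same way. For the potential, $\iint \Rhs \, \dd\gam_x \, \dd x = \int \int_0^1 \Rhs(x,u(x)) \, \dd x \, \dd P(u)$, and the boundary term splits analogously. This use of Tonelli is justified by applying it separately to the positive and negative parts, which works because the negative parts are integrable. As a consequence, $\EEu(u)$ is well-defined and $>-\infty$ for $P$-a.e. $u$, and $u \mapsto \EEu(u)$ is $P$-integrable from below. An average of a function cannot lie strictly below its infimum, so $\inf_u \EEu(u) \le \EE(\gam)$. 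Taking the infimum over $\gam$ yields the equality of the infima, and this also covers the case where both are $-\infty$.

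For the second claim, if $u^*$ minimizes $\EEu$, then $\EE(\delta_{u^*}) = \EEu(u^*) = \inf \EEu = \inf \EE$, so $\gam^*$ is a minimizer. The main obstacle I expect is the measurability and integrability bookkeeping: checking that $u \mapsto \EEu(u)$ is measurable on $H^1$ (here I would use the Carathéodory property, or simply that $u \mapsto \Rhs(x,u(x))$ is jointly measurable in $(x,u)$), and the trace identification above. Both should be routine given the continuity of $H^1$ curves in one dimension.
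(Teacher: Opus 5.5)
Your proposal is correct and follows the paper's proof: you apply the superposition principle (Theorem~\ref{theo:superposition}) to a finite-energy competitor, write $\EE(\gam) = \int_{H^1(I,\R^n)} \EEu(u)\,\dd P(u)$, and conclude $\EE(\gam) \geq \inf_u \EEu(u)$. The paper uses the trace identification $\gamb_0 = {e_0}_\# P$, $\gamb_1 = {e_1}_\# P$ and the Fubini/integrability bookkeeping implicitly, so your explicit justification of these (via Lemma~\ref{lem:weak_der_int_b} and the integrability of the negative parts) adds detail without changing the route.
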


\begin{proof}
	Due to~\eqref{eq:ineq_obvious_lift}, we always have $\inf_\gam \EE(\gam) \leq \inf_u \EEu(u)$, so we want to prove the reverse inequality. Let us take $\gam \in L^0(\Omega,\cP(\R^n))$. Without loss of generality, we only need to handle the case $\EE(\gam) \in \R$. In particular, $\Dir(\gam) < + \infty$, so we can apply Theorem~\ref{theo:superposition}: we deduce that there exists $P \in \cP(H^1([0,1],\R^n))$ such that 
	\begin{align*}
		\EE(\gam) & = \frac{1}{2} \int_{H^1(I,\R^n)} \left( \int_0^1 |\dot{u}(x)|^2 \dd x \right) \dd P(u)
		+ \iint_{I \times \R^n} \Rhs(x,y) \,\dd\gam_x(y)\,\dd x \\
		& \qquad + \int_{\R^n} \phib(0,y)\,\dd\gamb_0(y) + \int_{\R^n} \phib(1,y)\,\dd\gamb_1(y)  \\
		& = \int_{H^1(I,\R^n)} \EEu(u) \, \dd P(u).
	\end{align*}
	The conclusion $\EE(\gam) \geq \inf \EEu$ follows, thus the result.
\end{proof}
\begin{rmk} At first glance, if we drop the RHS term involving $\Rhs$ (i.e., if $\Rhs\equiv 0$), the one-dimensional problem considered in this section appears to fit within the primal setting of entropy-transport problems studied in \cite{LMS18}. However, this is not the case, because our boundary terms (say, at $t=0$) are of the form $\int_{\R^n} \phib(0,y)\, \dd\gamb_0.$ If we try to write this as a relative entropy of $\gamb_0$ with respect to some measure (cf.\ their formula~(2.35)), the corresponding entropy function would have to be linear, in which case $\phib(0,y)$ would necessarily be constant. In our setting, however, $\phib(0,y)$ depends on $y$. Moreover, even in the case when $\phib(0,y)$ is constant, their basic assumptions exclude linear relative entropies. The fact that they excluded the linear case is not surprising, since Corollary~\ref{cor-det} indicates that~\eqref{e:mainlifted} is not genuinely measure-valued in dimension $1$. \end{rmk} 

\begin{rmk} \label{brenier} A related minimization problem is obtained by taking $\Omega=[0,1]$, $\Rhs\equiv 0$, and $\phib(1,\cdot)\equiv 0$, together with the additional constraint that $\gamb_1$ is fixed, which means that a measure-valued Dirichlet boundary condition is imposed at $1$, while an inhomogeneous Neumann boundary condition is still prescribed at $0$. As explained in \cite[Appendix B]{V25}, this is the so-called ballistic optimal transport, see also \cite{V22,MST26,V26,AK25,AP25}. This problem belongs to a broader class of variational problems whose underlying minimax framework was originally proposed by Brenier in \cite{CMP18}. Such problems arise naturally from the dual formulation of various PDEs, cf. \cite{V22,A23}. Note that $\phib(0,\cdot)$ can be viewed a posteriori as the Kantorovich potential for the Monge--Kantorovich problem with marginals $\gamb_0$ and $\gamb_1$, once the originally unknown $\gamb_0$ has been determined. \end{rmk}



\subsection{Genuinely measure-valued case} \label{s:measurev}

Finally, we provide two examples that fall within the framework of our existence theory (Theorem~\ref{theo:existence_lifted}) in which both the non-lifted problem~\eqref{e:mainnotlifted} and the lifted problem~\eqref{e:mainlifted} admit solutions, but their optimal values differ. In view of~\eqref{eq:ineq_obvious_lift}, the optimal values of the corresponding lifted problems cannot be attained by deterministic maps, showing that the measure-valued solutions cannot be deterministic. The construction is inspired by \cite[Proposition~5.5]{HL19} and thus seems to be related to the failure \cite{HL19} of a superposition principle for Wasserstein-valued harmonic functions with multidimensional sources.

\begin{example}\label{ex:value}
	Set $n=d=2$, $\Omega = B_1(0) \subset \R^2$.
	We will identify vectors from $\R^2$ with complex numbers.
	Fix $0<r_0<\frac12$, and select
	$\eta\in C^\infty(\overline{\Omega}\times\mathbb{R}^2)$
	with the following properties:
	\begin{enumerate}
		\item $0\leq \eta\leq 1$;
		\item $\eta(x,y)=0$ for $|x|\leq r_0$;
		\item $\eta(x,y)=0$ whenever $x=re^{i\theta}\neq 0$ and
		$y=\pm e^{i\theta/2}$;
		\item if $r\geq 2r_0$, $x=re^{i\theta}$ and $\eta(x,y)=0$, then
		$y=\pm e^{i\theta/2}$.
	\end{enumerate}
	Take also a non-negative function $\zeta \in C^\infty(\R^2)$ such that 
	\begin{enumerate}
		\item $\zeta(y) = |y|^2$ outside of $B_2(0)$;
		\item $\zeta(y) \equiv 0$ on $B_1(0)$.
	\end{enumerate}
	For all $\lambda > 0$ set
	\[
	F_\lambda(x, y) = \lambda (\eta(x,y) + \zeta(y)).
	\]
	We consider the problems~\eqref{e:mainnotlifted}, \eqref{e:mainlifted} with $\Rhs = F_\lambda$ and the boundary term $\phib \equiv  0$. Denote the corresponding energy functionals by $\EEu_\lambda$ and $\EE_\lambda$, respectively.
	Note that $F_\lambda$ is continuous, nonnegative, and 2-coercive, thus both problems admit a solution.
	
	Now, fix a measure-valued map $\bar{\gam}: \Omega \to \cP(\R^2)$ such that
	\[
	\bar{\gam}_x = \frac{1}{2} (\delta_{f(r) e^{i \theta/2}} + \delta_{-f(r) e^{i \theta/2}}), \quad x = r e^{i \theta},
	\]
	where $f$ is a smooth monotone function such that $f(r) = 0$ if $r \le \frac{r_0}{2}$ and $f(r) = 1$ if $r \ge r_0$.
	Clearly, $\gam$ is Lipschitz continuous in the Wasserstein metric, thus $\Dir(\bar{\gam}) < \infty$.
	Moreover, it satisfies by construction 
	\[
	\iint_{\Omega \times \R^2} F_\lambda(x, y) \,d\bar{\gam}_x(y)\,\dd x = 0, \quad \forall \lambda > 0.
	\]
	Therefore, for any $\lambda > 0$
	\[
	\min_{\gam\in L^0(\Omega, \cP(\R^2))} \EE_\lambda(\gam) \le \EE_\lambda(\bar{\gam}) = \frac{1}{2} \Dir(\bar{\gam}) < \infty.
	\]
	
	Suppose that our key claim does not hold and for any $\lambda > 0$ we have \[
	\min_{u\in L^0(\Omega,\R^2)} \EEu_\lambda(u) = \min_{\gam\in L^0(\Omega, \cP(\R^2))} \EE_\lambda(\gam).
	\] Then for any $\lambda > 0$ 
	\[
	\min_{u\in L^0(\Omega,\R^2)} \EEu_\lambda(u) \le \frac{1}{2} \Dir(\bar{\gam}).
	\]
	Denote by $u_\lambda$ the solution to the problem on the left-hand side.
	From the coercivity and the nonnegativity of $F_\lambda$ it follows that the norms $\|u_\lambda\|_{H^1}$ are uniformly bounded for $\lambda \ge 1$.
	Extract a subsequence $u_k = u_{\lambda_k}$ weakly converging to $u^* \in H^1$, with $\lambda_k \to \infty$.
	Since $\lambda \mapsto F_\lambda$ is pointwise monotone, we conclude that
	\[
	\EEu_{\lambda_k}(u^*) \le \liminf_{m \to \infty} \EEu_{\lambda_k}(u_m) \le \liminf_{m \to \infty} \EEu_{\lambda_m}(u_m) \le \frac{1}{2} \Dir(\bar{\gam})
	\]
	for all $k$. Thus,
	\[
	\int_{\Omega} \eta(x, u^*(x)) \,\dd x + \int_{\Omega} \zeta(u^*(x)) \,\dd x = \frac{1}{\lambda_k} \int_{\Omega} F_{\lambda_k}(x, u^*(x)) \,\dd x \to 0.
	\]
	This yields that
	\[
	\int_{\Omega} \eta(x, u^*(x)) \,\dd x = \int_0^1 \int_0^{2 \pi} \eta(r e^{i \theta}, u^*(r e^{i \theta}))\, r\,\dd \theta \, \dd r = 0,
	\]
	and hence $u^*(r e^{i \theta}) = \pm e^{i \theta / 2}$ for a.e.\ $(r, \theta) \in [2 r_0, 1] \times [0, 2 \pi]$.
	However, the boundary trace of $u^*\in H^1(B_r(0))$, $2r_0<r<1$, belongs to $H^{1/2}(\partial B_r(0))$, see~\cite[Ch. 16 and 19]{T2007}, and thus cannot have a jump discontinuity, see~\cite[Ch. 33]{T2007}.
	
	Consequently, for large enough $\lambda$, 
	\[
	\min_{u\in L^0(\Omega,\R^2)} \EEu_\lambda(u) > \frac{1}{2} \Dir(\bar{\gam}) \ge \min_{\gam\in L^0(\Omega, \cP(\R^2))} \EE_\lambda(\gam) .
	\]
\end{example}

In the same way, one constructs a problem with $F \equiv 0$ and non-trivial $\phib$.

\begin{example}\label{ex:value_boundary}
	Consider the same setting as in Example~\ref{ex:value}.
	Let $F \equiv 0$ and
	\[
	\phib^\lambda \equiv F_\lambda \text{~on~} \partial \Omega \times \R^2.
	\]
	Again, both the lifted and the non-lifted problems admit a solution,
	and the measure-valued map $\bar{\gamma}$ has uniformly bounded energy for all $\lambda > 0$.
	Assuming that the values of the lifted and non-lifted problems are the same, we infer that
	\begin{equation} \label{sob_norm}
		\|\nabla u_\lambda\|_{L^2(\Omega)} + \|u_\lambda\|_{L^2(\partial \Omega)}
	\end{equation}
	is uniformly bounded for all $\lambda \ge 1$, where $u_\lambda$ is a solution to the non-lifted problem.
	Since~\eqref{sob_norm} is equivalent to the standard norm in $H^1(\Omega)$, cf.\ \cite{B_cons}, for large $\lambda > 0$ we conclude by contradiction that $\min_{u} \EEu_\lambda(u) > \min_{\gam} \EE_\lambda(\gam)$ in exactly the same way as in Example~\ref{ex:value}.
\end{example}




\appendix
\section{Lower semi-continuity for lifted Carathéodory functionals}
\label{sec:appendix_caratheodory}

In this section $(\Theta,\sigma)$ is a Polish space endowed with a finite measure. We always use the results for $(\Theta,\sigma) = (\Omega,\dd x)$ or $(\Theta,\sigma) = (\p \Omega, \Hau)$.
We recall that a function $\phi : \Theta \times \R^n \to \R$ is called Carath\'eodory if $\phi(\cdot,y)$ is measurable for all $y \in \R^n$ and $\phi(\theta,\cdot)$ is continuous for $\sigma$-a.e. $\theta \in \Theta$.

If $\{ \gam_k \}$ is a sequence in $ L^0(\Theta,\cP(\R^n))$, we recall $ \gam_k$ converges weakly to $\gam \in L^0(\Theta,\cP(\R^n))$ if and only if the measures $\dd (\gam_k)_{\theta}(y) \dd \sigma(\theta)$ converge weakly on $\Theta \times \R^n$ to $\dd \gam_{\theta}(y) \dd \sigma(\theta)$.

\begin{lem} \label{l:ap2} 
	For $(\Theta,\sigma)$ a Polish space endowed with a finite measure, 
	let $\phi(\theta,y)$ be a bounded Carath\'eodory function on $\Theta \times \R^n$, and $\{ \gam_k \} \in L^0(\Theta,\cP(\R^n))$ converging weakly to some $\gam \in L^0(\Theta,\cP(\R^n))$.
	Then 
	\begin{equation} 
		\label{e:limitint}
		\lim_{k \to + \infty} \iint_{\Theta \times \R^n} \phi(\theta,y) \dd (\gam_k)_{\theta}(y) \dd \sigma(\theta) = \iint_{\Theta \times \R^n} \phi(\theta,y) \dd \gam_{\theta}(y) \dd \sigma(\theta).
	\end{equation}
\end{lem}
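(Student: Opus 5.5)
The key observation is that the first marginal of every measure $\dd (\gam_k)_\theta \dd \sigma$ is the same fixed measure $\sigma$. This lets us trade the lack of continuity of $\phi$ in $\theta$ for uniform closeness on a large compact set, via the Scorza-Dragoni theorem. Normalize so that $|\phi| \leq 1$, and fix $\epsilon > 0$.

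The first step is tightness. Since $\dd (\gam_k)_\theta \dd \sigma$ converges narrowly, Prokhorov's theorem gives a compact $K_\epsilon \subset \R^n$ with
\[
\iint_{\Theta \times (\R^n\setminus K_\epsilon)} \dd (\gam_k)_\theta \dd\sigma(\theta) \le \epsilon
\]
for all $k$ and also for the limit $\gam$. Note that the marginal on $\Theta$ is the fixed finite measure $\sigma$, which is itself tight.

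The second step uses Scorza-Dragoni. As $\Theta$ is Polish and $\sigma$ is a finite Borel (hence Radon) measure, there is a compact $C_\epsilon \subset \Theta$ with $\sigma(\Theta \setminus C_\epsilon) \le \epsilon$ such that $\phi$ restricted to $C_\epsilon \times \R^n$ is continuous. If one prefers to avoid the general Polish version, one can instead apply Lusin's theorem to the map $\theta \mapsto \phi(\theta,\cdot)|_{K}$, valued in the separable Banach space $C(K)$, for an enlarged ball $K \supset K_\epsilon$; this gives $C_\epsilon$ on which that map is continuous, hence $\phi$ is continuous on $C_\epsilon \times K$. By Tietze's extension theorem, extend $\phi|_{C_\epsilon\times K}$ to a continuous function $\tilde\phi$ on $\Theta\times\R^n$ with $|\tilde\phi|\le 1$.

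The third step is the comparison. For every measure $\mu$ in the family (the $\gam_k$ and $\gam$),
\[
\left|\iint (\phi-\tilde\phi)\, \dd\mu_\theta \dd\sigma\right| \le 2\bigl(\sigma(\Theta\setminus C_\epsilon) + \epsilon\bigr) \le 4\epsilon.
\]
The bound on the first piece uses that the $\Theta$-marginal is exactly $\sigma$ for every $\mu$; this is the crucial point. Narrow convergence applied to $\tilde\phi \in C_b$ handles the remaining term. Letting $\epsilon \to 0$ yields \eqref{e:limitint}.

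The main obstacle is making sure the Scorza-Dragoni/Lusin step is valid in the Polish setting. Continuity of $\phi$ in $y$ only holds for a.e.\ $\theta$, so the exceptional null set must be included in $\Theta \setminus C_\epsilon$. Measurability of $\theta\mapsto\phi(\theta,\cdot)|_K \in C(K)$ follows from separability of $C(K)$ and measurability of the evaluations at a countable dense set of points.
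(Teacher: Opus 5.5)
Your proof is correct and follows essentially the same route as the paper. Both arguments use Scorza-Dragoni to find a compact $C_\epsilon\subset\Theta$ of nearly full measure on which $\phi$ is jointly continuous, then extend to a bounded continuous $\tilde\phi$, then use that every $\dd(\gam_k)_\theta\dd\sigma$ has $\Theta$-marginal exactly $\sigma$ to bound the error by a multiple of $\epsilon$. The only differences are minor: the paper extends from $C_\epsilon\times\R^n$ directly (via Dugundji's theorem), so no tightness in $y$ is needed. Your Prokhorov cutoff is only required for your Lusin-based variant, and plain Tietze on the closed set $C_\epsilon\times K$, or even $C_\epsilon\times\R^n$, already suffices.
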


\begin{proof}
	We write $\tplan_k = \dd (\gam_k)_\theta \dd \sigma$ and $\tplan = \dd \gam_\theta \dd \sigma$.
	Fix $\varepsilon > 0$. By the Scorza-Dragoni theorem (see, e.g., \cite[Lemma 4.6]{giusti2003direct}), there exists $K \subset \Theta$ a compact set such that $\phi$ is continuous over $K \times \R^n$ and $\sigma(\Theta \backslash K) \leq \varepsilon$.
	By the extension of Tietze's theorem \cite[Theorem 4.1]{Dugundji1951} (applied to $\phi$ seen as a function from $K$ to $(C_{b}(\R^n,\R), \| \cdot \|_\infty$)), we can find $\tilde{\phi} : \Theta \times \R^n \to \R$ jointly continuous and bounded by $\| \phi \|_\infty$ such that $\tilde{\phi} = \phi$ on $K \times \R^n$. We obtain
	\begin{equation*}
		\left| \iint_{\Theta \times \R^n} \phi \dd \tplan_k - \iint_{\Theta \times \R^n} \phi \dd \tplan \right| \leq 4 \varepsilon \| \phi \|_\infty  + \left| \iint_{\Theta \times \R^n} \tilde{\phi} \, \dd (\tplan_k - \tplan)  \right|.
	\end{equation*}
	Thus as we send $k \to + \infty$, by continuity of $\tilde{\phi}$ the integral in the right-hand side vanishes. It means $\limsup_k \left| \iint_\QQ \phi \dd \tplan_k - \iint_\QQ \phi \dd \tplan \right| \leq 4 \varepsilon \| \phi \|_\infty$, and the result follows since $\varepsilon$ is arbitrary.
\end{proof}

\begin{cor} 
	\label{c:b1} 
	For $(\Theta,\sigma)$ a Polish space endowed with a finite measure, 
	let $\phi(\theta,y)$ be a Carath\'eodory function on $\Theta \times \R^n$ bounded from below, and $\{ \gam_k \} \in L^0(\Theta,\cP(\R^n))$ be converging weakly to $\gam \in L^0(\Theta,\cP(\R^n))$.
	Then 
	\begin{equation*} 
		\liminf_{k \to + \infty} \iint_{\Theta \times \R^n} \phi(\theta,y) \dd (\gam_k)_{\theta}(y) \dd \sigma(\theta) \geq \iint_{\Theta \times \R^n} \phi(\theta,y) \dd \gam_{\theta}(y) \dd \sigma(\theta).
	\end{equation*}
\end{cor}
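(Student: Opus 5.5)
The natural approach is a truncation argument combined with Lemma~\ref{l:ap2}. Write $\tplan_k = \dd (\gam_k)_\theta \dd \sigma$ and $\tplan = \dd \gam_\theta \dd \sigma$, and let $-c$ be a lower bound for $\phi$. Since $\sigma$ is finite and each $\tplan_k$, $\tplan$ has total mass $\sigma(\Theta)$, adding the constant $c$ to $\phi$ shifts both sides of the inequality by the same finite amount $c\,\sigma(\Theta)$. We may therefore assume $\phi \geq 0$, in which case all integrals are well defined in $[0,+\infty]$.

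For $M > 0$ define the truncation $\phi_M = \min(\phi, M)$. It is still Carathéodory: measurability in $\theta$ and continuity in $y$ are preserved by taking the minimum with a constant. It is also bounded, with values in $[0,M]$. Lemma~\ref{l:ap2} then applies and gives
\begin{equation*}
\lim_{k\to+\infty} \iint_{\Theta \times \R^n} \phi_M \, \dd \tplan_k = \iint_{\Theta \times \R^n} \phi_M \, \dd \tplan.
\end{equation*}
Because $\phi \geq \phi_M$ pointwise and the measures are non-negative,
\begin{equation*}
\liminf_{k\to+\infty} \iint_{\Theta \times \R^n} \phi \, \dd \tplan_k \geq \liminf_{k\to+\infty} \iint_{\Theta \times \R^n} \phi_M \, \dd \tplan_k = \iint_{\Theta \times \R^n} \phi_M \, \dd \tplan
\end{equation*}
holds for every $M$.

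Finally, let $M \to +\infty$. Since $\phi_M \uparrow \phi$ pointwise and $\phi_M \geq 0$, the monotone convergence theorem gives $\iint \phi_M \, \dd \tplan \to \iint \phi \, \dd \tplan$, possibly equal to $+\infty$, which yields the claim. There is no real obstacle: the only points to check are that truncation preserves the Carathéodory property and that the constant shift is legitimate, and both use only the finiteness of $\sigma$ and the fact that the fibers are probability measures.
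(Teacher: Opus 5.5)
Your proposal is correct and follows essentially the same route as the paper: truncate to $\min(\phi,M)$, apply Lemma~\ref{l:ap2} to this bounded Carath\'eodory function, then let $M\to\infty$ by monotone convergence. The only difference is cosmetic: you first shift $\phi$ to be non-negative, whereas the paper skips this step because $\min(\phi,m)$ is already bounded when $\phi$ is bounded from below.
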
 

\begin{proof} 
	We use the same notations $\tplan_k$ and $\tplan$ as in the proof of Lemma~\ref{l:ap2}.
	Without loss of generality, the $\liminf$ in the left-hand side is finite since otherwise there is nothing to prove. Let $\phi_m \coloneqq \min (\phi,m), m\in \mathbb{N}$.  Then 
	$$\liminf_{k \to \infty}\iint_{\Theta \times \R^n} \phi \dd \tplan_k \ge\lim_{k \to \infty}\iint_{\Theta \times \R^n} \phi_m \dd \tplan_k = \iint_{\Theta \times \R^n} \phi_m \dd \tplan $$ 
	by Lemma \ref{l:ap2}. The result follows by the monotone convergence theorem. \end{proof}

\begin{cor}
	\label{c:b2}
	For $(\Theta,\sigma)$ a Polish space endowed with a finite measure, 
	let $\phi(\theta,y)$ be a Carath\'eodory function on $\Theta \times \R^n$ which is $p$-bounded from below for $p \geq 0$ in the sense of Definition~\ref{defi:p_bounded_coercive}. Let $\{ \gam_k \} \in L^0(\Theta,\cP(\R^n))$ converging weakly to $\gam \in L^0(\Theta,\cP(\R^n))$ be such that, for some $q > p$,
	\begin{equation}
		\label{eq:sup_q_integral}
		\sup_{k} \iint_\QQ |y|^q \dd (\gam_k)_{\theta}(y) \dd \sigma(\theta) < + \infty.
	\end{equation}
	Then we can conclude
	\begin{equation*} 
		\liminf_{k \to + \infty} \iint_{\Theta \times \R^n} \phi(\theta,y) \dd (\gam_k)_{\theta}(y) \dd \sigma(\theta) \geq \iint_{\Theta \times \R^n} \phi(\theta,y) \dd \gam_{\theta}(y) \dd \sigma(\theta).
	\end{equation*}
\end{cor}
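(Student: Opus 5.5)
We split $\phi$ into a part that is bounded from below, which Corollary~\ref{c:b1} handles, and a tail part that the uniform $q$-moment bound~\eqref{eq:sup_q_integral} makes small. As in the proof of Theorem~\ref{theo:existence_lifted}, we first reduce to $a \equiv 0$: adding the integrable function $a(\theta)$ to $\phi$ shifts both sides by the same finite constant $\int a \, \dd\sigma$, because the $\theta$-marginal of every measure involved is $\sigma$. So we assume $\phi(\theta,y) \geq -C|y|^p$. We also assume the $\liminf$ on the left is finite and pass to a subsequence that realizes it.

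Fix $R>0$ and decompose $\phi = \phi_R + (\phi - \phi_R)$, where $\phi_R \coloneqq \max(\phi, -C R^p)$. This function is Carathéodory and bounded from below by $-CR^p$. The remainder $\phi - \phi_R$ is nonpositive and vanishes unless $\phi < -CR^p$, which forces $|y| > R$. On that set,
\[
|\phi - \phi_R| \le C|y|^p \le C R^{p-q} |y|^q .
\]
Writing $M$ for the supremum in~\eqref{eq:sup_q_integral}, this gives
\[
\iint (\phi-\phi_R)\,\dd\tplan_k \ge -C R^{p-q} M \quad \text{uniformly in } k.
\]
Corollary~\ref{c:b1} applied to $\phi_R$ then yields
\[
\liminf_k \iint \phi\,\dd\tplan_k \ \ge\ \iint \phi_R\,\dd\tplan - CR^{p-q}M .
\]

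It remains to let $R\to\infty$. As $R$ increases, $\phi_R$ decreases pointwise to $\phi$. Since $\phi_R \le \max(\phi,0) = \phi_+$, monotone convergence applied to the negative parts gives $\iint \phi_R\,\dd\tplan \to \iint \phi\,\dd\tplan$, provided the integral of $\phi$ against $\tplan$ is well defined. The positive part needs no extra argument, because $\phi_R^+ = \phi^+$. The negative part is controlled by $C|y|^p$. The moment $\iint |y|^q\,\dd\tplan \le M$ is finite by lower semicontinuity, since $|y|^q$ is nonnegative and continuous and Corollary~\ref{c:b1} applies. Because $\sigma$ is finite, this implies $\iint |y|^p\,\dd\tplan < \infty$, so $\phi_-$ is integrable against $\tplan$ and the right-hand side makes sense in $(-\infty,+\infty]$. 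Since $p<q$, the error term $CR^{p-q}M$ tends to zero, which concludes the argument.

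The main point needing care is this final passage to the limit. One must make sure that $\iint\phi\,\dd\tplan$ is well defined, via the lower-semicontinuity bound on the $q$-moment just described, and that the convergence of the negative parts is monotone, or else dominated by $C|y|^p$. Everything else is a direct application of Corollary~\ref{c:b1}.
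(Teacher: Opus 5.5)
Your proof is correct, but it takes a different route from the paper's.

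\textbf{The paper's route.} The paper does not truncate. It adds $C|y|^p$ and an integrable function of $\theta$ to $\phi$ to get a nonnegative Carath\'eodory function, applies Corollary~\ref{c:b1} to it, and then subtracts back. Subtracting back requires the full convergence $\iint |y|^p\,\dd\tplan_k\to\iint|y|^p\,\dd\tplan$. The paper gets this by citing two facts: the $q$-moment bound makes the $p$-moments uniformly integrable, and narrow convergence together with uniform integrability lets one pass to the limit (Lemma 5.1.7 in Ambrosio--Gigli--Savar\'e).

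\textbf{Your route.} You truncate $\phi$ from below at level $-CR^p$ and apply Corollary~\ref{c:b1} to $\phi_R$. You control the discarded part uniformly in $k$ with the Chebyshev-type bound $|y|^p\le R^{p-q}|y|^q$ on $\{|y|>R\}$. You then remove the truncation by monotone convergence on the negative parts. In effect you inline the uniform-integrability argument, and only in the one-sided form that is actually needed.

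\textbf{Comparison.} The paper's proof is shorter, but it outsources the key step to a cited two-sided convergence of $p$-moments. Yours is self-contained. It also checks explicitly that $\iint\phi\,\dd\tplan$ is well defined, since the limit has finite $q$-moment by lower semicontinuity; the paper leaves this point implicit.

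One harmless remark: you do not need to reduce to a finite $\liminf$ along a subsequence. The case $\liminf=-\infty$ is not trivially excluded, but your chain of inequalities holds for the whole sequence and rules it out by itself.
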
 

\begin{proof}
	We use the same notations $\tplan_k$ and $\tplan$ as in the proof of Lemma~\ref{l:ap2}.
	Let $a \in L^1(\Theta)$ and $C > 0$ be such that $\tilde{\phi}(x,y) = \phi(x,y) + a(x) + C |y|^p$ is non-negative. Note that $\tilde{\phi}$ is still a Carath\'eodory function. From Corollary~\ref{c:b1}, 
	\begin{equation*}
		\liminf_{k \to + \infty} \iint_{\Theta \times \R^n} \tilde{\phi} \dd \tplan_k \geq \iint_{\Theta \times \R^n} \tilde{\phi} \dd \tplan.
	\end{equation*}
	Expanding the definition of $\tilde{\phi}$, noting that the integrals of $a$ do not depend on $k$ (as they only depend on the first marginal), we obtain the result provided we justify that 
	\begin{equation*}
		\lim_{k \to \infty}\iint_{\Theta \times \R^n} |y|^p \dd \tplan_k = \iint_{\Theta \times \R^n} |y|^p \dd \tplan.
	\end{equation*}
	The latter claim comes as $\{ \tplan_k \}$ is uniformly $p$-integrable thanks to the bound~\eqref{eq:sup_q_integral}, cf.\ \cite[Eq. (5.1.20)]{AmbrosioGigliSavare08}, and thus we can exchange limit and integral for an integrand that has $p$-growth, cf.\ \cite[Lemma 5.1.7]{AmbrosioGigliSavare08}.
\end{proof}

\paragraph{Acknowledgments.}
The research leading to this project started during a visit of AK and HL to DV in the University of Coimbra, which is warmly acknowledged for the hospitality. No AI tool was used for the conception and mathematical content of this project, except for fixing a few English and typographic errors. 

\def\cprime{$'$} \def\cprime{$'$}

\end{document}